\newcommand{\C} {\mathbb C}
\newcommand{\h} {\hat}
\newcommand{\mc}{\mathcal}
\newtheorem{theorem}{Theorem}
\newtheorem{lemma}[theorem]{Lemma}
\newtheorem{proposition}[theorem]{Proposition}
\newtheorem*{definition}{Definition}
\newtheorem*{Example}{Example}
\title{On dynamical Teichm\"{u}ller spaces.}
\author{Carlos Cabrera  and Peter Makienko\\
        \\
        Instituto de Matem\'aticas,\\ Unidad Cuernavaca. UNAM}
\begin{document}

\maketitle
\footnotetext{This work was partially supported by PAPIIT project IN 100409.}
\begin{abstract}

\end{abstract}

Following ideas from a preprint of the second author, see \cite{Makaut}, we
investigate relations of dynamical Teichm\"{u}ller spaces with dynamical
objects. We also establish some connections with the theory of deformations of
inverse limits and laminations in holomorphic dynamics, see \cite{LM}.

\section{Introduction.}

Sullivan introduced the study of a dynamical Teichm\"{u}ller space, which we
denote by $T_1(R)$, associated to a rational function $R$. The space of orbits
of $T_1(R)$, under the action of an associated modular group $Mod_1(R)$,
coincides with the space $QC(R)$ of quasiconformal deformations of $R$. We
modify Sullivan's definition to get another Teichm\"{u}ller space $T_2(R)$, with
its corresponding modular group $Mod_2(R)$. In this situation, the
$J$-stability component is the space of orbits of $T_2(R)$, under the action of
$Mod_2(R)$. When $R$ is hyperbolic, the $J$-stability component is the
hyperbolic component of $R$.

There are natural inclusions of the space $T_1(R)$ into $T_2(R)$, and from the
group $Mod_1(R)$ into $Mod_2(R)$. We find that, properties of these
inclusions are related to the dynamics of $R$. With this at hand, we can
establish relations between the dynamics of $R$ and topological properties of
$T_2(R)$.

When the Julia set of $R$ is totally disconnected. The space $T_2(R)$ has a
laminated structure. In this way, we also realize $T_2(R)$ as the space of
deformations of the natural extension of $R$. The structure of the paper is as
follows.

In Section 2, we recall basic definitions and facts of the classical
Teichm\"{u}ller space $T_1(R)$.

In Section 3, we introduce $T_2(R)$ and show that, as in the case of $T_1(R)$,
is a complete metric space. In Theorem \ref{surjec}, we establish
characterizations for the path connectivity of $T_2(R)$. Using this, we prove
Theorem \ref{Julia.conn} stating that, when $R$ is a polynomial, the
connectivity of Julia set $J(R)$ is equivalent to the path connectivity of
$T_2(R)$.

In Section 4, we restrict to the case where $J(R)$ is homeomorphic to a Cantor
set. In this case, $T_2(R)$ is a trivial product of $T_1(R)$ times a totally
disconnected space. We finish the section giving a characterization of the
property that $J(R)$ is homeomorphic to a Cantor set in terms of properties of
$T_2(R)$.

Finally, in Section 5, we construct a realization of $T_2(R)$ as the space of
deformations of the natural extension of $R$.

\section{The Teichm\"{u}ller space $T_1(R)$.}

Given a rational map $R$, let us define the space
$T_1(R)=T(S_R)\times B_R$, where $S_R$ is a Riemann surface
associated to the Fatou set $F(R)$, $T(S_R)$ denotes the classical
Teichm\"{u}ller's space of $S_R$, and $B_R$ is the space of invariant Beltrami
differentials, defined on the Julia set $J(R)$, which are compatible with the
dynamics of $R$. That is, $B_R$ is the space of measurable $(1,-1)$
forms $\mu$ with $L^{\infty}$ norm bounded by $1$, satisfying the
conditions that $\mu$ is $0$ outside the Julia set $J(R)$ and
$R^*(\mu)=\mu$. For a more detailed account of the
definitions see \cite{McMAut} and \cite{McMSull}.

An equivalent way to define $T_1(R)$ is as the set of isotopy
classes of pairs $\langle[R_1],[\phi]\rangle$ where $\phi$ is a
quasiconformal conjugation of $R$ to the rational map $R_1 $. The
first modular group $Mod_1(R)$, is the group of all isotopy classes
of quasiconformal homeomorphisms of $\C$ commuting with $R$. The group
$Mod_1(R)$ acts on $T_1(R)$ with the action given by
$$[\phi]\langle [g],[\psi]\rangle = \langle [g], [\psi \circ
\phi^{-1}]\rangle.$$ A theorem due to Sullivan and McMullen states
that $Mod_1(R)$ acts on $T_1(R)$ as a group of isometries, for more
details see \cite{McMAut} and \cite{McMSull}. The formula
$T_1(R)/Mod_1(R)=QC(R)$, where $QC(R)$ is the space of
quasiconformal deformations of $R$, will play an important role in
what follows.

\section{The space $T_2(R)$.}

We will define a Teichm\"{u}ller space that generalizes the formula
$$T_1(R)/Mod_1(R)=QC(R)$$ for the $J$-stability component of $R$.

\begin{definition} Let $(X,d_1)$ and $(Y,d_2)$ be metric spaces, a map
$\phi:X\rightarrow Y$ is called $K$-quasiconformal, in Pesin's sense if, for
every
$x_0\in X$
$$\limsup_{r\rightarrow 0}\left\{
\frac{\sup\{|\phi(x_0)-\phi(x_1)|:|x_0-x_1|<r\}}{\inf\{
|\phi(x_0)-\phi(x_1)|:|x_0-x_1|<r\}}\right\}\leq K.$$
\end{definition}

Let us recall that two rational maps $R_1$ and $R_2$ are
$J$\textit{-equivalent}, if there is a homeomorphism
$h:J(R_1)\rightarrow J(R_2)$, which is quasiconformal in Pesin's
sense and conjugates $R_1$ to $R_2$.

Given a family of maps $\{R_w\}$ depending holomorphically on a
parameter $w\in W$, a map $R_{w_0}$ in $\{R_w\}$ is called
$J$\textit{-stable} if, there is a neighborhood $V$ of $w_0$ such
that, $R_w$ is $J$-equivalent to $R_{w_0}$ for all $w\in V$, and the
conjugating homeomorphisms depend holomorphically on $w$.

We denote by $QC_J(R)$, the $J$-\textit{stability component} of a
rational map $R$. This is the path connected component of the
$J$-equivalence class of $R$ containing $R$. In \cite{MSS},
Ma\~{n}e, Sad and Sullivan proved that for every holomorphic family
of rational maps, the union of the $J$-stability components is open
and dense. When $R$ is hyperbolic, an application of the
$\lambda$-Lemma, for holomorphic motions around $J(R)$, shows that
$QC_J(R)$ coincides with $Hyp(R)$, the hyperbolic component of
$R$.

Let $R$ be a rational map, we define the space $X_n(R)$ as the set
of pairs $(h,U)$, where $U$ is an open
neighborhood of the Julia set $J(R)$, and
$h:U\rightarrow \C$ is a quasiconformal embedding such that
$$h\circ R\circ h^{-1}=R_h$$ is the restriction of rational map, with $\deg
R\leq \deg R_h \leq n$, wherever the conjugacy is well defined.

We say that $(h_1, U_1) \sim (h_2,U_2)$ in $X_n(R)$ if, and only if, there
exists open sets $V_1$ and $V_2$, satisfying $V_i\subset U_1\cap U_2$,
$J(R)\subset V_i$, for $i=1,2$, and a M\"{o}bius transformation
$\gamma:\C\rightarrow \C$ such that the following diagram commutes
$$\begin{CD} V_1 @>h_1>> \C\\ @VFVV @VV\gamma V\\ V_2 @>h_2>> \C
\end{CD}$$ and so that, $F$ is a map homotopic to the identity, with
a homotopy that commutes with $R$. 

With this equivalence relation on $X_n(R)$, we can take representatives $(h,U)$
such that, $U$ has nice dynamical properties. For instance, if $R$ is hyperbolic
we can always choose $U$ satisfying $R^{-1}(U)\subset U$.

Following classical Teichm\"{u}ller theory, the map $\gamma$ would
be a holomorphic map. However next proposition, which is actually a
folklore fact, justifies our definition.

\begin{proposition} Let $R_1$ and $R_2$ be rational maps, and $\gamma$ a
conformal map that conjugates $R_1$ to $R_2$ around a neighborhood
of $J(R_1)$. Then $\gamma$ is the restriction of a M\"{o}bius
transformation.
\end{proposition}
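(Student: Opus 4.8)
The plan is to extend $\gamma$ to a globally defined rational self-map of the Riemann sphere $\wh{\C}$ and then to show, using the dynamics, that this extension has degree one; a degree-one rational map is a M\"obius transformation, and $\gamma$ is its restriction. First I would record the local consequences of the hypothesis. Writing $U$ for the neighborhood on which $\gamma$ is defined, the relation $\gamma\circ R_1=R_2\circ\gamma$ shows that $\gamma$ carries the non-normality locus to the non-normality locus, so $\gamma(J(R_1))=J(R_2)$; since $\gamma^{-1}$ has the same property near $J(R_2)$, the restriction $\gamma|_{J(R_1)}$ is a homeomorphism onto $J(R_2)$ and $\deg R_1=\deg R_2=:d$. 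The case $d=1$ is immediate, so I assume $d\geq 2$.

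Next I would propagate $\gamma$ by the rule $\gamma(R_1^{\,n}(z)):=R_2^{\,n}(\gamma(z))$ for $z\in U$. Since $d\geq 2$, for every point outside the (at most two) exceptional points $E$ of $R_1$ the backward orbit accumulates on all of $J(R_1)$ and hence meets $U$; consequently
$$V:=\bigcup_{n\geq 0}R_1^{\,n}(U)=\wh{\C}\setminus E,\qquad |E|\leq 2.$$
On $V$ this formula defines a continuation of $\gamma$, and the real content is that it is \emph{single valued}. This is the step I expect to be the main obstacle. I would settle it through the monodromy theorem: single valuedness on the connected set $V$ reduces to triviality of the monodromy around the generators of $\pi_1(V)$. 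If $|E|\leq 1$ then $V$ is simply connected and there is nothing to prove. If $|E|=2$, then $R_1$ is conformally conjugate to $z\mapsto z^{\pm d}$ and $J(R_1)$ is a round circle separating the two points of $E$, so the single generator of $\pi_1(V)$ can be represented by a loop lying inside $U$; along such a loop the continuation coincides with the genuinely single-valued function $\gamma$, and the monodromy is trivial. The finitely many branch points of the continuation, together with the at most two punctures $E$, are removable because the map takes values in the compact sphere $\wh{\C}$. This yields a holomorphic, hence rational, map $\Gamma:\wh{\C}\to\wh{\C}$ with $\Gamma|_U=\gamma$ and $\Gamma\circ R_1=R_2\circ\Gamma$ everywhere.

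Finally I would compute $\deg\Gamma$. As $\Gamma$ semiconjugates $R_1$ to $R_2$, it is a standard fact that normality is preserved, so $\Gamma(F(R_1))\subseteq F(R_2)$; together with $\Gamma(J(R_1))=J(R_2)$ this gives $\Gamma^{-1}(J(R_2))=J(R_1)$. A generic point of the infinite set $J(R_2)$ therefore has exactly $\deg\Gamma$ preimages under $\Gamma$, all of them lying in $J(R_1)$; but $\Gamma|_{J(R_1)}=\gamma$ is injective, so each such point has a single preimage. Hence $\deg\Gamma=1$, so $\Gamma$ is a M\"obius transformation and $\gamma=\Gamma|_U$ is its restriction. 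Once the single valuedness of the continuation is in hand, this degree count is immediate; I expect all the difficulty to be concentrated in establishing that single valuedness.
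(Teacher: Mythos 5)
Your strategy is essentially the one the paper uses: propagate $\gamma$ forward by the functional equation $\gamma(R_1^n(z)) = R_2^n(\gamma(z))$ onto $\bigcup_n R_1^n(U) = \wh{\C}\setminus E$, invoke the monodromy theorem for single-valuedness, and conclude that the resulting global map is M\"obius. Your endgame is in fact more complete than the paper's: where the paper simply asserts that the extension is a single-valued (``unimodal'') holomorphic map of the sphere and hence M\"obius, you justify degree one via $\Gamma^{-1}(J(R_2)) = J(R_1)$ together with injectivity of $\gamma$ on $J(R_1)$; that count is correct and worth having.

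The soft spot is exactly where you predicted, but your reduction of single-valuedness to the computation of $\pi_1(V)$ is not quite right. The continuation at stage $n$ is $R_2^n\circ\gamma\circ(R_1^n)^{-1}$, and $(R_1^n)^{-1}$ has algebraic branch points at the critical values of $R_1^n$; these lie in $V$, so the continuation is a priori unrestricted only on $V$ minus a countable set, which is not simply connected even when $|E|\le 1$. The monodromy theorem therefore does not dispose of that case ``for free'': one must still show that the local monodromy around each critical value of each iterate is trivial, i.e.\ that $R_2^n\circ\gamma$ is constant on the fibers of $R_1^n|_U$ --- and that is the real content of the statement. Your remark that the branch points are ``removable because the target is compact'' conflates removability of an isolated singularity of a single-valued map with triviality of the local monodromy of a multivalued continuation; the former applies only after the latter is established. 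To be fair, the paper's own proof elides the same point (it asserts that the branch extensions $\gamma_x$ and $\gamma_y$ coincide on $U$ and then appeals to monodromy), so your proposal is no less rigorous than the original; to close it, one can first shrink $U$ so that its only critical points lie on $J(R_1)$, where the conjugacy relation holds on a full neighborhood of the critical point and its image, and then propagate the identity $R_2^n\circ\gamma = \gamma\circ R_1^n$ from the neighborhood $\{z : z, R_1(z),\dots,R_1^n(z)\in U\}$ of $J(R_1)$ by the identity theorem. Your treatment of the case $|E|=2$, where $R_1$ is conjugate to $z^{\pm d}$ and no critical values lie in $V$, is fine as written.
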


\begin{proof}
Let $U$ be the neighborhood around $J(R_1)$ on which $\gamma$ is
defined, and let $x$ be a point in $U$, we define
$\gamma_x(R_1(x))=R_2(\gamma(x))$ and analytically continue $\gamma$ on
$R_1(U)$ through arcs starting at $R_1(x)$. In this way, we obtain a, possibly
multivalued, extension $\gamma_x$ of $\gamma$. By construction, $\gamma_x$ also
conjugates $R_1$ to $R_2$. Now let $y\in R^{-1}_1(R_1(x))$, using the branch
induced by $y$, we can define another extension $\gamma_y$ of $\gamma$
putting $\gamma_y(R_1(x))=R_2(\gamma(x))$, and analytically continue $\gamma_y$
along paths. Now, $\gamma_x$ and $\gamma_y$ coincide in $U$, hence by the
Monodromy Theorem $\gamma_x=\gamma_y$ in all $R_1(U)$. Thus the extension of
$\gamma$ on $R_1(U)$ does not depend on branches and is a well defined
holomorphic map. By induction, we extend $\gamma$ to $\bigcup_m^\infty
R^m_1(U)$.
But, since $U$ contains $J(R_1)$, the set $\bigcup_m^\infty R^m_1(U)$ covers the
whole Riemann sphere, with exception of at most two points.  Hence, $\gamma$
extends to a unimodal holomorphic function defined on the sphere, so $\gamma$ is
a M\"{o}bius transformation. \end{proof}

Let $T_{2,n}(R)=X_n(R)/\sim$, this definition generalizes the notion
of the Teichm\"{u}ller space for a rational function. The space
$X_n(R)$ is extremely big, note that we can change the neighborhood $U$,
arbitrarily in the pair $(h,U)$, and still get the same point in
$T_{2,n}(R)$. For instance, the restriction of $h$ on a smaller neighborhood.
Consider the space $T_{2,n}(z^2)$ with $n\geq 3$,
this space contains all maps of the form $z^2+\lambda z^3$ for $\lambda$
small enough. In this paper, we will restrict to the case where
$n=\deg(R)$ and, in this situation, we will omit the subindex $n$.

Two quasiconformal maps $f:U\rightarrow V$ and $g:U'\rightarrow V'$, defined on
neighborhoods of $J(R)$, are equivalent $f\sim g$, if there exist $W\subset U
\cap U'$ on which $f$ and $g$ are homotopic, with a homotopy that commutes with
$R.$ We can define a modular group $$Mod_2(R)=\{h:U\rightarrow V \textnormal{
q.c}: h \textnormal{ commutes with } R, \, J(R)\subset U \}/\sim.$$

Let $R$ be a hyperbolic rational map, one can check that
$$Hyp(R)\cong T_{2}(R)/Mod_2(R).$$ Note that the group $Mod_2(R)$ does not
depend on $n$. For $n>\deg R$, the quotient $T_{2,n}(R)/Mod_2(R)$
forms a much bigger space containing $Hyp(R)$, it also contains
other components, coming from higher degrees, arranged on the
boundary of $Hyp(R)$. This construction allow us to consider, as basic
points of the Teichm\"{u}ller space, points that ``belong'' to the
boundary of other $T_2(R')$. For instance, $z^2$ ``belongs'' to the
boundary of the space $T_2(\lambda z^3 +z^2)$ for $\lambda$ close to
zero, but not zero. In fact, the same is true for $T_1(\lambda z^3
+z^2)$. Nevertheless, the complete picture is yet to be
understood.

Now, let us define a third modular group $Mod_3(R)$, as the group of maps
$\phi:J(R)\rightarrow J(R)$ which are quasiconformal in Pesin's sense and
commute with $R.$

One would be inclined to introduce a third Teichm\"{u}ller space $T_3(R)$. A
sensible definition for this space, is to consider the set of quasiconformal
maps, in the sense of Pesin, defined just in the $J(R)$ and commuting
with $R$. However, it is not clear how to relate this Teichm\"{u}ller space with
the usual quasiconformal theory. In other words, in general, is not clear if 
the natural map from $Mod_2(R)$ to $Mod_3(R)$ is surjective.  We can carry on
this
discussion when  the map $R$ is hyperbolic and, more generally, when the Julia
set is described as limits of telescopes with bounded geometry. In these
cases, every quasiconformal map, defined on the  Julia set and inducing an
isomorphism on telescopes, can be extended to a quasiconformal map defined on a
neighborhood of $J(R)$. For definition on telescopes see \cite{Sullconf}.

\subsection{The space $T_2(R)$ is a complete metric space.}
Consider the formula $$d([f],[g])=\inf \log K(g^{-1}\circ f),$$ where $K$
denotes the distortion, and the infimum is taken over all representatives of
the maps $f$ and $g$.
This formula defines a pseudodistance on equivalence classes of quasiconformal
maps. In particular, defines a distance on the space $T_1(R)$, see
\cite{McMSull}.

\begin{theorem}
The Teichm\"{u}ller pseudodistance on $T_2(R)$ defines a distance and, with
this distance, $T_2(R)$ is a complete metric space.
\end{theorem}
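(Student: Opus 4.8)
The plan is to adapt the classical argument for $T_1(R)$ from \cite{McMSull} to the present setting, where representatives are germs of quasiconformal maps defined on varying neighborhoods of $J(R)$, and to organize the proof into three parts: the metric axioms, non-degeneracy, and completeness. The formal axioms are routine. The quantity $d([f],[g])$ is well defined on classes because the infimum already ranges over all representatives of $[f]$ and $[g]$. Symmetry follows from the invariance of the dilatation under inversion, $K(\psi)=K(\psi^{-1})$, applied to $\psi=g^{-1}\circ f$. The triangle inequality follows from submultiplicativity of the dilatation under composition, $\log K(\psi_1\circ\psi_2)\le\log K(\psi_1)+\log K(\psi_2)$, together with the fact that representatives of three classes can be composed on a common neighborhood of $J(R)$; passing to infima over representatives then gives $d([f],[h])\le d([f],[g])+d([g],[h])$.

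The substance is non-degeneracy: $d([f],[g])=0$ should force $[f]=[g]$, and here I would argue by compactness. Choosing a minimizing sequence of representatives, the comparison maps $\psi_k=g_k^{-1}\circ f_k$ satisfy $K(\psi_k)\to 1$; a normal-families argument (on a fixed neighborhood of $J(R)$, see below) extracts a locally uniform limit $\psi$ that is $1$-quasiconformal, hence conformal. Since the induced rational maps $R_{f_k}$ and $R_{g_k}$ have uniformly bounded degree, the rigidity of the conformal limit lets one match the induced maps, so that $\psi$ commutes with $R$ and conjugates $R$ to $R$ near $J(R)$. The preceding Proposition then upgrades $\psi$ from conformal to the restriction of a M\"obius transformation, which is exactly the top arrow $\gamma$ in the defining diagram of $\sim$; the convergence $\psi_k\to\psi$ simultaneously supplies a connecting map homotopic to the identity through a homotopy commuting with $R$. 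Hence $(f,U)\sim(g,U')$ and $[f]=[g]$.

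For completeness, I would take a Cauchy sequence $([f_k])$, pass to a subsequence with $d([f_k],[f_{k+1}])<2^{-k}$, and choose representatives so that $\theta_k=f_{k+1}^{-1}\circ f_k$ has $\log K(\theta_k)<2^{-k}$ on a neighborhood of $J(R)$. Because $\sum_k\log K(\theta_k)<\infty$, the maps $f_k$ have uniformly bounded dilatation, and a normal-families argument again yields a locally uniform limit $f_\infty$, quasiconformal on a neighborhood of $J(R)$. One checks that $f_\infty$ conjugates $R$ to a rational map whose degree lies in the admissible range between $\deg R$ and $n$ (the degree can neither drop nor jump in a locally uniform limit of conjugacies), so that $(f_\infty,U_\infty)$ represents a point of $T_2(R)$, and finally that $d([f_k],[f_\infty])\to 0$.

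I expect the main obstacle to be the technical point flagged twice above: controlling the domains of definition. Since any representative may be restricted to an arbitrarily small neighborhood of $J(R)$ without changing its class, a careless limit might be defined only on $J(R)$ itself rather than on an open neighborhood, and so fail to lie in $X_n(R)$. The crux is therefore to select, along the minimizing and Cauchy sequences, representatives whose domains contain a common fixed open neighborhood of $J(R)$ and on which the dilatation estimates hold uniformly, so that the compactness arguments deliver a genuine element of $X_n(R)$. A secondary delicate point, in the non-degeneracy step, is to verify the full equivalence relation --- in particular the homotopy commuting with $R$ --- and not merely to produce the M\"obius transformation $\gamma$; this is where the structure of $\sim$, rather than conformality alone, is genuinely used.
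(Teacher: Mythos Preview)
Your outline is sound and you have correctly located the real difficulty: preventing the domains of the comparison maps from collapsing onto $J(R)$. But you stop exactly where the work begins. You write that ``the crux is therefore to select \dots\ representatives whose domains contain a common fixed open neighborhood of $J(R)$,'' yet you give no mechanism for producing such representatives. One cannot simply \emph{select} them: a priori, a minimizing sequence for $d([f],[g])$ may achieve distortion close to $1$ only on neighborhoods shrinking to $J(R)$, and nothing in the equivalence relation hands you a fixed $U$ for free.

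The paper's proof supplies precisely this missing step, and it is dynamical rather than variational. Given a repelling periodic point $x_0\in J(R)$ and a small disk $W\ni x_0$ on which $R$ is injective, the conjugacy relation $\phi_n\circ R = R_{\phi_n}\circ \phi_n$ lets one \emph{extend} $\phi_n$ from whatever piece of its original domain meets $W$ to all of $W$, by iterating the formula $\phi_n(R(z))=R_{\phi_n}(\phi_n(z))$. Doing this at every repelling periodic point, and using hyperbolicity (or, in the non-hyperbolic case, a modified argument through the critical values), one obtains a single neighborhood $U$ of $J(R)$---depending only on the distance from $J(R)$ to the critical set, not on $n$---to which every $\phi_n$ extends with the same distortion bound. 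Only then does the normal-families argument go through. Your invocation of the preceding Proposition to upgrade the conformal limit to a M\"obius map is correct and is implicitly what the paper means by ``related by a holomorphic map,'' but without the extension step you have no conformal limit on an open set to which to apply it. The same extension device is what makes the completeness argument work; your telescoping estimate $\sum\log K(\theta_k)<\infty$ is fine, but again needs a fixed $U$ on which the $\theta_k$ (hence the $f_k$) are simultaneously defined.
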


\begin{proof}

The map $d$ clearly is positive, reflexive and satisfies the triangle
inequality. Let us check that $d$ is non degenerate.

Let $(\phi_n,U_n)$ be a sequence of representative points in $T_2(R)$,
such that the distortion $K(\phi_n)$ converges to $1$. Note that the
neighborhoods $U_n$ may converge to the Julia set in the sense of Hausdorff.
Hence, let us check that the maps $\phi_n$ are eventually well defined over
a neighborhood $U$ of $J(R)$. Then show that, in $U$, the maps
$\phi_n$ converge to a holomorphic map $\phi$. This will finish the proof,
because if $d([f],[g])=0$, then $f$ and $g$ are related by a holomorphic map.

First let us assume that $R$ is hyperbolic. Consider a repelling fixed point
$x_0$ of $R$ in $J(R)$, and a neighborhood $W$ around $x_0$. Choose $W$
so that, the diameter $diam(W)$ is less than half the distance of $x_0$ to the
critical set of $R$. With this choice the map $R$ is injective in $W$. We
extend the definition of $\phi_n$ to $W$ using the formula
$\phi_n(R(z))=R(\phi_n(z))$. The same construction works around all repelling
periodic points. Since the map is hyperbolic, this construction extends the
definition of $\phi_n$ to a neighborhood $U$ of $J(R)$, that only depends on the
distance of $J(R)$ to the critical set. The space of quasiconformal maps with
bounded distortion is compact, then the maps $\phi_n$ converge to a holomorphic
map on $W$.

When $R$ is not hyperbolic, the argument is more subtle. Since there are
critical points on the boundary and nearby, the diameters of the corresponding
neighborhoods converge to zero. However, we still can extend the domains of 
$\phi_n$. To do so, take neighborhoods around the critical values in the Julia
set, and extend to the critical points using the formula
$\phi_n(R(z))=R(\phi_n(z))$.

A slight modification in the argument above also shows that every Cauchy
sequence in $T_2(R)$ converges, thus $T_2(R)$ is a complete metric
space. \end{proof}

\subsection{The homomorphisms $\alpha$ and $\beta$.}

Each class of maps in $Mod_1(R)$ belongs to a class of maps in $Mod_2(R)$,
and correspondingly in $Mod_3(R)$. So, we have the following chain of
homomorphisms
$$Mod_1(R)\overset{\alpha}
{\longrightarrow}Mod_2(R)\overset{\beta}{\longrightarrow} Mod_3(R).$$

The whole sphere is a neighborhood of the Julia set, hence a class of maps in
$T_1(R)$ uniquely determines a class of maps in $T_2(R)$. This gives a
map $H:T_1(R)\rightarrow T_2(R)$.
Let us remark that the map $H$, in general, is not injective nor surjective.
However, the properties of the map $H$ are connected with the
homomorphism $\alpha.$

\begin{proposition}\label{ker.a} For any rational map $R$, we have
$$H(T_1(R))\cong T_1(R)/\ker \alpha.$$
\end{proposition}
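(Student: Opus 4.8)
The plan is to realize the claimed isomorphism as the bijection induced by $H$ on orbits, by checking separately that $H$ collapses every $\ker\alpha$-orbit to a point and that it collapses nothing else. Throughout I write a point of $T_1(R)$ as $\langle[R_1],[\psi]\rangle$, where $\psi$ conjugates $R$ to the rational map $R_1$, and I use that the whole sphere is a neighborhood of $J(R)$ to define $H(\langle[R_1],[\psi]\rangle)=[(\psi,\C)]$.

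First I would show that $H$ is constant on $\ker\alpha$-orbits. Let $[\phi]\in\ker\alpha$ and $p=\langle[R_1],[\psi]\rangle$. By definition of $\ker\alpha$, there is a neighborhood $V$ of $J(R)$ on which $\phi$ is homotopic to the identity through a homotopy commuting with $R$. Since $[\phi]\cdot p=\langle[R_1],[\psi\circ\phi^{-1}]\rangle$, it suffices to exhibit the equivalence $(\psi,\C)\sim(\psi\circ\phi^{-1},\C)$ that defines $T_2(R)$: take $\gamma=\mathrm{id}$ and $F=\phi|_V$ in the defining diagram, so that $\gamma\circ\psi=(\psi\circ\phi^{-1})\circ F$ holds on $V$ and $F\simeq\mathrm{id}$ through a homotopy commuting with $R$. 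Thus $H([\phi]\cdot p)=H(p)$, and $H$ descends to a surjection $\bar H\colon T_1(R)/\ker\alpha\to H(T_1(R))$.

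The heart of the argument is the injectivity of $\bar H$. Assume $H(p_1)=H(p_2)$ with $p_i=\langle[R_i],[\psi_i]\rangle$. Unwinding the relation defining $T_2(R)$ yields a M\"obius transformation $\gamma$, a neighborhood $V$ of $J(R)$, and a map $F\simeq\mathrm{id}$ (through a homotopy commuting with $R$) with $\gamma\circ\psi_1=\psi_2\circ F$ on $V$. Combining the conjugacy relations $\psi_i\circ R=R_i\circ\psi_i$ with $F\circ R=R\circ F$ shows that $\gamma$ conjugates $R_1$ to $R_2$; hence, by the standard identification of $T_1$-points with M\"obius-conjugate targets, I may replace $\psi_2$ by $\gamma^{-1}\circ\psi_2$ and assume from the start that $R_1=R_2$, $\gamma=\mathrm{id}$, and $\psi_1=\psi_2\circ F$ on $V$. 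Now set $\phi=\psi_1^{-1}\circ\psi_2$. Because $\psi_1$ and $\psi_2$ are \emph{globally} defined conjugacies of $R$ to $R_1$, the map $\phi$ is a global quasiconformal homeomorphism of $\C$ commuting with $R$, so $[\phi]\in Mod_1(R)$; and on $V$ we have $\phi^{-1}=\psi_2^{-1}\circ\psi_1=F\simeq\mathrm{id}$, so $\alpha([\phi])=\mathrm{id}$ and $[\phi]\in\ker\alpha$. Finally $[\phi]\cdot p_2=\langle[R_1],[\psi_2\circ\phi^{-1}]\rangle=\langle[R_1],[\psi_1]\rangle=p_1$, so $p_1$ and $p_2$ share a $\ker\alpha$-orbit.

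The step I expect to demand the most care is this injectivity argument, and its decisive feature is that one never has to extend the locally defined map $F$ to the whole sphere by hand: the global markings $\psi_1,\psi_2$ already supply a global element $\psi_1^{-1}\circ\psi_2$ of $Mod_1(R)$ whose restriction near $J(R)$ is $F^{-1}$, which is precisely what places it in $\ker\alpha$. The remaining points to be pinned down carefully are the M\"obius normalization (legitimate by the definition of $T_1(R)$) and the verification that the three instances of ``homotopic to the identity through a homotopy commuting with $R$'' --- the one in the equivalence defining $T_2(R)$, the one defining the relation on $Mod_2(R)$, and the one implicit in $\ker\alpha$ --- coincide, so that $F\simeq\mathrm{id}$ on $V$ is exactly the condition $[\phi]\in\ker\alpha$. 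Continuity of $\bar H$ and of its inverse for the Teichm\"uller metrics is then routine, yielding the claimed isomorphism.
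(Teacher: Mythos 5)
Your proof is correct, and it takes a genuinely different route from the paper's. The paper argues through the commutative diagram relating $T_1(R)\to T_2(R)$ to $QC(R)\to Hyp(R)$: from $H(\phi_1)=H(\phi_2)$ it passes to the quotients, uses the asserted injectivity of $QC(R)\to Hyp(R)$ together with $T_1(R)/Mod_1(R)=QC(R)$ to produce an element $\psi\in Mod_1(R)$ relating $\phi_1$ and $\phi_2$, and then asserts $\alpha(\psi)=Id$. You instead unwind the two equivalence relations directly: constancy of $H$ on $\ker\alpha$-orbits is read off from the defining diagram of $\sim$ with $\gamma=\mathrm{id}$ and $F=\phi|_V$, and for injectivity you normalize away the M\"obius factor and exhibit the relating element explicitly as $\psi_1^{-1}\circ\psi_2$, whose restriction near $J(R)$ is $F^{-1}\simeq\mathrm{id}$, which is exactly membership in $\ker\alpha$. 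Your version buys several things: it is self-contained at the level of the definitions; it does not invoke $T_2(R)/Mod_2(R)=Hyp(R)$, which the paper only establishes for hyperbolic $R$ even though the proposition is stated for arbitrary $R$; it does not need injectivity of $QC(R)\to Hyp(R)$; and it actually verifies the step the paper leaves implicit, namely that the element of $Mod_1(R)$ relating the two markings lies in $\ker\alpha$ (which is not automatic from the quotient argument alone, since the action of $Mod_2(R)$ on $T_2(R)$ need not be free). What the paper's route buys is economy and the reuse of the diagram (*) in the proof of Theorem \ref{surjec}. The only points in your write-up deserving a final check are the ones you flag yourself: that the homotopy notions in $\ker\alpha$ and in the $T_2$-equivalence agree (they do, both being homotopies commuting with $R$ on a neighborhood of $J(R)$), and that $F\simeq\mathrm{id}$ passes to $F^{-1}\simeq\mathrm{id}$, which holds once the homotopies are taken through homeomorphisms, the same level of rigor the paper itself operates at.
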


\begin{proof}
Consider the following commutative diagram
\begin{equation*}\tag{*}
 \begin{CD} T_1(R) @>H>> T_2(R)\\ @VVV @VV
V\\ QC(R) @>>> Hyp(R) \end{CD}
\end{equation*}
where the map, from $QC(R)$ to $Hyp(R)$, is an embedding with dense image. We
use the formulae $T_1(R)/Mod_1(R)=QC(R)$ and $T_2(R)/Mod_2(R)=Hyp(R)$.
Assume that $H(\phi_1)=H(\phi_2)$, then there are neighborhoods $U_1$,
$U_2$, $V_1$, $V_2$  and a M\"{o}bius map $\gamma$ such that the following
diagram commutes $$\begin{CD} U_1
@>\phi_1>> V_1\\ @VFVV @VV\gamma V\\ U_2 @>\phi_2>> V_2 \end{CD}$$ and, the map
$F=\phi_2^{-1}\circ \gamma\circ\phi_1$ is homotopic to $Id$ in $U$, with a
homotopy that commutes with dynamics. If $\phi_1\neq \phi_2$ in $T_2(R)$, then
the homotopy can not be extended to a global map in the plane. Since
$H(\phi_1)=H(\phi_2)$, the images of $\phi_1$ and $\phi_2$, under $H$, project
to the same element in $Hyp(R)$. By the commutativity of the diagram (*),
$\phi_1$ and $\phi_2$ project to the same element in $QC(R)$. Hence,
$\phi_1$ and $\phi_2$ are related by a non-trivial element $\psi\in Mod_1(R)$
satisfying $\alpha(\psi)=Id$. So we have $H(T_1(R))\cong T_1(R)/\ker \alpha.$
\end{proof}

\begin{theorem}\label{surjec} The following conditions are equivalent:
\begin{itemize}
\item The homomorphism $\alpha$ is surjective.
\item The set $H(T_1(R))$ is dense in $T_2(R)$.
\item The space $T_2(R)$ is path connected.\end{itemize}
\end{theorem}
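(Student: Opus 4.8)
The plan is to reduce everything to two structural features of the map $H$. First, $H$ is equivariant with respect to $\alpha$: if $[\phi]\in Mod_1(R)$ and $x\in T_1(R)$, then $H([\phi]\cdot x)=\alpha([\phi])\cdot H(x)$. Second, writing $q_1:T_1(R)\to QC(R)$ and $q_2:T_2(R)\to Hyp(R)$ for the quotient projections and $\iota:QC(R)\hookrightarrow Hyp(R)$ for the dense embedding of diagram (*), one has $q_2\circ H=\iota\circ q_1$. From these two facts I read off the key fibrewise picture: for $\bar q\in QC(R)$ the fibre $q_2^{-1}(\iota(\bar q))$ is a single $Mod_2(R)$-orbit, and $H(T_1(R))$ meets it exactly in the sub-orbit $\alpha(Mod_1(R))\cdot H(x)$, for any $x$ with $q_1(x)=\bar q$. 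I will also use that $T_1(R)$ is connected, so that $H(T_1(R))$ is path connected, and that $q_2$ is an open map on which $Mod_2(R)$ acts properly discontinuously by isometries, so that the fibres are discrete and $q_2$ admits path lifting over $Hyp(R)$.

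For the two forward implications I would argue as follows. If $\alpha$ is surjective then $\alpha(Mod_1(R))\cdot H(x)$ is the whole orbit $Mod_2(R)\cdot H(x)$, so $H(T_1(R))=q_2^{-1}(\iota(QC(R)))$. This set is dense: given $y\in T_2(R)$ and a ball $B$ around it, $q_2(B)$ is a neighbourhood of $q_2(y)$ and hence meets the dense set $\iota(QC(R))$, producing a point of $q_2^{-1}(\iota(QC(R)))$ inside $B$; this gives the second condition. For the third, take any $y\in T_2(R)$, choose a path in $Hyp(R)$ from $q_2(y)$ to a point of $\iota(QC(R))$, and lift it to a path joining $y$ to a point $y'\in q_2^{-1}(\iota(QC(R)))=H(T_1(R))$. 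Since $H(T_1(R))$ is path connected, $y$ and $y'$ lie in one path component, and as $y$ was arbitrary $T_2(R)$ is path connected.

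It remains to prove the converses, and I would do this in contrapositive form, showing that if $\alpha$ is not surjective then $T_2(R)$ is neither path connected nor has $H(T_1(R))$ dense. Pick $g\in Mod_2(R)\setminus\alpha(Mod_1(R))$ and $\bar q\in QC(R)$, and set $p=g\cdot H(x)$ with $q_1(x)=\bar q$; by the fibrewise picture $p$ lies in $q_2^{-1}(\iota(\bar q))$ but not in $H(T_1(R))$. The whole point is that, because $Mod_2(R)$ acts properly discontinuously, this fibre is discrete and $q_2$ is locally a product over $Hyp(R)$; thus a small enough neighbourhood of $p$ meets $H(T_1(R))$ only along sheets sitting at the transverse position $g$, of which there are none, so $H(T_1(R))$ is not dense. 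The same local product structure shows that a path issuing from $p$ projects to a path in $Hyp(R)$ whose lift returns to a fibre only at transverse positions in the discrete coset of $g$, never at positions in $\alpha(Mod_1(R))$; hence $p$ cannot be joined to $H(T_1(R))$, and $T_2(R)$ is not path connected. The main obstacle is precisely this transverse control: one must rule out that $H(T_1(R))$ accumulates on $p$, or is joined to it, through nearby fibres, and this is exactly the step where the properly discontinuous, covering-type (foliated) structure of $T_2(R)$ over $Hyp(R)$---together with the fact that the density of $\iota(QC(R))$ lives only in the \emph{base}---has to be used in an essential way. It is also the point at which any nontrivial stabilisers of the $Mod_2(R)$-action would have to be addressed.
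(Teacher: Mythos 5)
Your implication $(1)\Rightarrow(2)$ is essentially the paper's argument and is fine. The genuine gap is in the structural hypothesis you lean on for everything else: that $Mod_2(R)$ acts properly discontinuously on $T_2(R)$, so that the fibres of $q_2:T_2(R)\to Hyp(R)$ are discrete and $q_2$ is a covering-type local product admitting path lifting. This is false, and the paper itself refutes it later: in Section~4 it is proved that the transversal $Mod_2(P)/\alpha(Mod_1(P))$ is \emph{perfect} --- the Dehn-twist elements $g_n$ all have the same distortion, so the points $g_n(Id)$ lie in a fixed ball and accumulate --- hence the $Mod_2(R)$-orbit forming the fibre $q_2^{-1}(\iota(\bar q))$ is not discrete and no evenly-covered structure with discrete fibres exists over $Hyp(R)$. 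This undermines (a) your path-lifting proof of $(1)\Rightarrow(3)$, and more seriously (b) your entire converse direction, where ``a small enough neighbourhood of $p$ meets $H(T_1(R))$ only along sheets at the transverse position $g$'' is exactly the transverse control you correctly identify as the main obstacle, but then resolve by appealing to the discreteness that is not there. In particular your direct argument that $\neg(1)$ implies $H(T_1(R))$ is not dense does not go through as written.

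The paper avoids all of this by proving the implications cyclically, $(1)\Rightarrow(2)\Rightarrow(3)\Rightarrow(1)$, so that no direct ``not dense'' argument is ever needed. For $(2)\Rightarrow(3)$ it connects $x$ and $y$ by taking paths $\gamma_n$ in the path-connected set $H(T_1(R))$ between approximating points $x_n\to x$, $y_n\to y$ and passing to a uniform limit (no lifting over $Hyp(R)$ is invoked). For $(3)\Rightarrow(1)$ it uses only the disjointness dichotomy: if $\phi(H(T_1(R)))\cap H(T_1(R))\neq\emptyset$ then $\phi\in\alpha(Mod_1(R))$, so for $\phi\notin\alpha(Mod_1(R))$ the translate $\phi(H(T_1(R)))$ is disjoint from $H(T_1(R))$, and these translates decompose $T_2(R)$ into distinct path components. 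If you want to repair your write-up, replace the covering-space machinery by this coset/translate argument (and note that the stabiliser issue you flag at the end is absorbed into the disjointness dichotomy), and derive $\neg(1)\Rightarrow\neg(2)$ from the chain rather than directly.
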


\begin{proof}
Assume that the homomorphism $\alpha$ is surjective. Again we make use of the
diagram
(*). Given any $\epsilon>0$ and a point $x\in T_2(R)$, there exist $y\in
H(T_1(R))$
and $\phi\in Mod_2(R)$ such that $d(\phi(y),x)<\epsilon$. Since $\alpha$
is surjective, there exist $\psi \in Mod_1(R)$ such that $\alpha(\psi)=\phi$.
But this implies that $\phi(y)\in H(T_1(R))$. Thus the set $H(T_1(R))$ is dense
in
$T_2(R)$.

Let us assume that $H(T_1(R))$ is dense in $T_2(R)$, take two points $x$ and $y$
in $T_2(R)$, then there are two sequences $\{x_n\}$ and $\{y_n\}$ in $H(T_1(R))$
converging to $x$ and $y$, respectively. Since $T_1(R)$ is path connected, there
is a sequence of paths $\gamma_n$ in $H(T_1(R))$, with $\gamma_n(0)=x_n$ and
$\gamma_n(1)=y_n$. By analytical continuation along $\gamma_n$, we can
force the sequence $\{\gamma_n\}$ to converge uniformly to a path $\gamma$, in
$T_2(R)$, connecting $x$ with $y$. Hence $T_2(R)$ is path connected.

Let $\phi\in Mod_2(R)$, if $\phi(H(T_1(R)))\cap H(T_1(R))\neq \emptyset$, then
$\phi\in \alpha(Mod_1(R))$, and $\phi(H(T_1(R)))=H(T_1(R)).$
In the other hand, if $\phi\in Mod_2(R)\setminus \alpha(Mod_1(R))$, then
$\phi(H(T_1(R)))\cap H(T_1(R))= \emptyset$.
This shows that $T_2(R)$ is not path connected if $Mod_2(R)\setminus
\alpha(Mod_1(R))\neq \emptyset.$ In fact, $T_2(R)$ is decomposed into path
connected components by $H(T_1(R))$ and its orbit
under the action of $Mod_2(R)/\alpha(Mod_1(R)).$

\end{proof}

\begin{Example}\label{Blashke.sur} Let us consider the map $F(z)=z^n$, the Julia
set is the unit circle $\mathbb{S}^1$. Let $\phi\in Mod_2(F)$, by composing with
a rotation, we can assume that $\phi(1)=1$. Any orientation
preserving automorphism of the unit circle that fixes $1$, and commutes with the
dynamics of $F$, must be the identity. This is so, since such automorphism must
fix every point in the grand orbit of $1$, and every grand orbit is dense in
$\mathbb{S}^1$.
Thus, $\phi$ restricted to $\mathbb{S}^1$ is the identity. Taking a suitable
homotopic representative of $\phi$, we can assume that $\phi$ leaves a tubular
neighborhood of $S^1$ invariant. The dynamics on this tubular neighborhood have
a fundamental domain homeomorphic to an annulus. Thus $\phi$ induces a
quasiconformal automorphism of this annulus. The group of quasiconformal
automorphisms of an annulus is generated by a Dehn twist of angle $2\pi$.

Let $\tau$ be this generator. Since $\phi$ commutes with dynamics, $\tau$ most
be propagated to the grand orbit of the fundamental group. A preimage of $\tau$
has the angle $2\pi/n$. A forward image increases the angle by $2\pi n$. But
$\phi$ is defined on a neighborhood $U$ of $\mathbb{S}^1$. Then, $\tau$ only
iterates finitely many times in $U$. Thus, the total angle is bounded, and
then the mapd induced by $\tau$ in $U$ can be continuously deformed to the
identity. This extends to every map generated by $\tau$.

By the assumption above, any element in $Mod_2(F)$ is represented by a
rotation which can be globally extended to an element in $Mod_1(F)$. The
homomorphism $\alpha$ is surjective hence, by Theorem \ref{surjec}, $T_2(F)$
is path connected. If $G$ is a hyperbolic Blashke map, then $G$ restricts to a
degree $n$ expanding map on $\mathbb{S}^1$, so $G$ is locally conjugated to $F$.
If $G$ is a Blashke map, then the map $\alpha$ is surjective and $T_2(G)$ is
path connected.
\end{Example}

The previous example motivates the following proposition.

\begin{theorem}\label{Julia.conn} Let $P$ be a polynomial, then
$T_2(P)$ is path connected if, and only if, the Julia set $J(P)$ is connected.
\end{theorem}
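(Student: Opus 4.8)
The plan is to reduce the statement to a question about the homomorphism $\alpha$ and then read off the answer from the topology of the basin of infinity. First I would invoke Theorem \ref{surjec}: since path connectivity of $T_2(P)$ is equivalent to surjectivity of $\alpha$, it suffices to prove that $\alpha$ is surjective if and only if $J(P)$ is connected. For a polynomial $P$ of degree $d=\deg(P)$, I would use the standard dichotomy: $J(P)$ is connected exactly when the basin of infinity $A(\infty)$ is simply connected, equivalently when no finite critical point escapes, and in that case the B\"{o}ttcher coordinate $\psi$ conjugates $P$, on a full neighborhood of $\infty$ inside $A(\infty)$, to $w\mapsto w^{d}$. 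This is the structural input that makes the neighborhood of $J$ on the side of $A(\infty)$ look exactly like the model of Example \ref{Blashke.sur}.

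For the ``if'' direction, assume $J(P)$ connected and take a class $[\phi]\in Mod_2(P)$ represented by a quasiconformal $\phi:U\to V$ commuting with $P$ near $J$. I would extend $\phi$ to a global quasiconformal homeomorphism commuting with $P$ up to a homotopy that commutes with dynamics, thereby exhibiting it as $\alpha$ of an element of $Mod_1(P)$. On the side of $A(\infty)$ the B\"{o}ttcher conjugacy reduces the extension to the model computation of Example \ref{Blashke.sur}: a Dehn twist can be iterated only finitely many times inside $U$ before its angle grows without bound, so the total twisting is bounded and $\phi$ is homotopic, through maps commuting with $P$, to one that extends across $A(\infty)$. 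On the finite side, the grand orbit of any point of $J$ is dense in $J$, so $\phi|_{J}$ is rigid enough that the homotopy-commuting-with-dynamics freedom lets one fill in the bounded Fatou components; connectivity of the filled Julia set guarantees there is no combinatorial obstruction to gluing. Hence $\alpha$ is surjective and, by Theorem \ref{surjec}, $T_2(P)$ is path connected.

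For the ``only if'' direction, assume $J(P)$ is disconnected. Then $A(\infty)$ is not simply connected and at least one finite critical point escapes, producing nontrivial monodromy of the external rays around the escaping post-critical orbit. I would use this to build an explicit element of $Mod_2(P)\setminus\alpha(Mod_1(P))$: a quasiconformal automorphism near $J$ realizing a nontrivial twist, or a permutation of the infinitely many components of $J$, compatible with $P$ locally but which cannot be the restriction of any global homeomorphism commuting with $P$, since a global extension would have to be continuous and $P$-equivariant across the escaping critical point around which the rays are permuted. In the extreme case where $J$ is a Cantor set, this is exactly the totally disconnected factor isolated in Section 4, so $T_2(P)$ is manifestly not path connected. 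By Theorem \ref{surjec}, $\alpha$ fails to be surjective, which completes the equivalence.

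The hard part will be the ``if'' direction: making the extension across arbitrary bounded Fatou dynamics rigorous. The delicate point is precisely the bookkeeping of the Dehn twists --- one must show that simple connectivity of $A(\infty)$, i.e. the absence of escaping critical points, forces the accumulated twisting angle inside the fixed neighborhood $U$ to stay bounded, so that $\phi$ becomes homotopic to a globally extendable map. This is where connectivity of $J(P)$ is genuinely used, in contrast to the disconnected case, where the unbounded monodromy around an escaping critical value is exactly what obstructs global extension.
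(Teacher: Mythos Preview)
Your overall architecture is correct and matches the paper: reduce to surjectivity of $\alpha$ via Theorem \ref{surjec}, then for the disconnected case exhibit an element of $Mod_2(P)\setminus\alpha(Mod_1(P))$, and for the connected case extend any $\phi\in Mod_2(P)$ globally. Your ``only if'' sketch is essentially the paper's idea; the paper makes it concrete by taking two Jordan curves $\gamma_1,\gamma_2$ in the Fatou set with $P(\gamma_1)=P(\gamma_2)$, each enclosing a piece of $J(P)$, and letting $\phi$ be a Dehn twist along $\gamma_1$ and the identity along $\gamma_2$, propagated by dynamics. This cannot extend globally because a global $P$-equivariant map must act the same way on both preimages of $P(\gamma_1)$.

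The genuine gap is in your ``if'' direction, and you have misdiagnosed where the difficulty sits. Once $J(P)$ is connected, $A(\infty)$ is a topological disk and B\"{o}ttcher conjugates $P$ to $z^d$ on all of $A(\infty)$, so Example \ref{Blashke.sur} handles the extension across $A(\infty)$ directly; there is no further ``bookkeeping of Dehn twists'' to do on that side. The actual work is the extension across the \emph{bounded} Fatou components, and your sentence ``$\phi|_{J}$ is rigid enough \ldots\ connectivity of the filled Julia set guarantees there is no combinatorial obstruction to gluing'' is not an argument. Knowing $\phi$ on $\partial W$ for a bounded component $W$ does not by itself produce a quasiconformal extension to $W$ that commutes with $P$; one needs a construction adapted to the dynamics on $W$. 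The paper does this by a case analysis on the type of the periodic component: for an attracting (hyperbolic) $W$, conjugate to a Blaschke model and invoke Example \ref{Blashke.sur} again; for a Siegel disk, homotope $\phi$ so that it fixes an invariant circle of the rotation and then extend radially; for a parabolic basin, use a horodisk $D$, observe that $P^n$ induces an isomorphism on $\pi_1(W\setminus C_v)$ (Hurwitz), and define $\phi$ on the compact part $K=W\setminus U$ by $\phi(x)=P^n(\phi(y))$ for $y\in P^{-n}(x)$, checking this is well defined and $P$-equivariant. Without something like this three-case analysis your ``if'' direction does not go through.
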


\begin{proof}
Assume that $J(P)$ is not connected, then there exist at least two
disjoint Jordan curves $\gamma_1$ and $\gamma_2$, contained in the
Fatou set, such that $P(\gamma_1)=P(\gamma_2)$, and the interior of
each curve intersects a piece of the Julia set. We can take $\gamma_1$ and
$\gamma_2$ such that, the image of these curves do not intersect the
postcritical set. Let $\phi:=(\phi,U)$ be the element in $Mod_2(P)$, defined by
a Dehn twist on $\gamma_1$ and acting as the identity in $\gamma_2$. Using
dynamics, extend these actions to the grand orbit of $\gamma_1$ and $\gamma_2$.
Then, $\phi$ can
not be extended continuously to a global map in $Mod_1(P)$, commuting
with dynamics of $P$. This is because the action, of the extension
of $\phi$, is homotopically different in two preimages of $P(\gamma_1)$.

Now, let us suppose that $J(P)$ is connected and let $\phi$ be an
element in $Mod_2(P)$. We will extend $\phi$ to a globally
defined map in $Mod_1(P)$. Since $P$ is a polynomial, $\infty$ is a
superattracting fixed point. If $deg(P)=d$, by B\"{o}ttcher's Theorem, $P$ is
conjugated to $z^d$ on the basin of attraction $A_0(\infty)$.

As we showed in Example \ref{Blashke.sur}, $\phi$ can be extended to
$A_0(\infty)$ and, the action of $\phi$ on $A_0(\infty)$ is either a rotation
or the identity. But $J(P)=\partial(A_0(\infty))$, then the boundary of each
Fatou component is either fixed by $\phi$ or, is moved to another component by
a rotation. In either case, interchanges Fatou components univalently. Then, it
is enough to extend the map on each periodic component. Once it is done, we use
the
dynamics of $P$ to extend to preperiodic components.

Let us check that we can extend $\phi$ to every periodic Fatou
component $W$. There are three cases; if $W$ is hyperbolic then $P$
is conjugated on $W$ to a hyperbolic Blashke map so, by Example
\ref{Blashke.sur}, $\phi$ can be extended to $W$.

If $W$ is a Siegel disk, then $\phi$ is defined on $U$ a neighborhood of $J(P)$.
We can modify $\phi$ using a homotopy, so that $\phi$ leaves invariant a
rotational leaf $L$ of the Siegel foliation of $W$. Since $\phi$ is
quasiconformal in $U$, the restriction of $\phi$ to $L$ is quasi-regular. Hence,
we can radially extend $\phi$ to a quasiconformal map in $W$.

Finally, the case where $W$ is a parabolic Fatou component. Let
$K=W\setminus U$ be the compact set where the map $\phi$ is not defined. The
neighborhood $U$ contains a horodisk $D$, induced by the parabolic
dynamics of $P$, in $W$. It also contains all the $P^n$-preimages of $K$, for
a sufficiently large $n$. Thus $P^n$ has a lifting from $K$ to $U$.
Let $C_v=\{v_1,v_2,...,v_m\}$ be the set of critical values in $W$, and
$*$ be a given point in $(U\cap W)\setminus C_v$. By Hurwitz Theorem, the map
$P^n$ induces an isomorphism of the fundamental group $\pi_1(W\setminus C_v,*)$.

Hence, given a point $x$ in $K$ such that $P(x)\in D$. Take $y\in
P^{-n}(x)$, and define $\phi(x)=P^n(\phi(y))$. As a consequence of
the Hurwitz argument above, $\phi(x)$ does not depend on the point
$y$. Moreover, any homotopy that moves the point $y$, must move all
other elements in $P^{-n}(x)$. Since the map induced by $P^n$ in
$\pi_1(W\setminus C_v,*)$ is an isomorphism. Also, $P^{k}$ and $\phi$
are defined in $U$ and commute for all $k\geq n$. Thus we have
$P(\phi(x))=P^{n+1}(\phi(y)))=\phi(P^{n+1}(y))=\phi(P(x))$, so the
extension of $\phi$ in $P^{-1}(D)\cap K$ commutes with $P$. The extension is
quasiconformal since $P$ is holomorphic. Finally, using the dynamics
of $P$, we extend $\phi$ to $K$. \end{proof}

\section{Maps with totally disconnected Julia sets.}

We now restrict the discussion to the case where the Julia set $J(R)$ is
homeomorphic to a Cantor set. Under these conditions, we show that the
Teichm\"{u}ller space $T_2(R)$ has a product structure. We shortly remind the
proof of the following known fact.

\begin{lemma}\label{Dehn.lem} Let $P$ be a unimodal polynomial such that $J(P)$
is totally disconnected, then $Mod_1(P)$ is generated by a single Dehn twist.
\end{lemma}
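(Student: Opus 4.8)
The plan is to realize $Mod_1(P)$ as a subgroup of the mapping class group of a single quotient torus and then to cut it down using the dynamics. First I would reduce everything to the Fatou set. Since $J(P)$ is a Cantor set it has zero area, so the space $B_R$ of Julia-supported invariant Beltrami differentials is trivial and $T_1(P)=T(S_R)$; moreover $F(P)=A_0(\infty)$ is the basin of the superattracting fixed point at $\infty$, and every class in $Mod_1(P)$ is determined, up to isotopy, by its restriction to $A_0(\infty)$. Because $J(P)$ is totally disconnected the unique finite critical point $c_0$ escapes, with Green's function value $\eta=G(c_0)>0$ and $G\circ P=(\deg P)\,G$. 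I would then fix a \emph{fundamental annulus} $\mathcal{A}$ lying above the critical value equipotential, so that $\mathcal{A}$ is swept out by round Jordan equipotentials in B\"{o}ttcher coordinates, where $P$ is the unbranched covering $w\mapsto w^{\deg P}$.

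Next I would identify $S_R$ with a torus. Since $G\circ P=(\deg P)\,G$, every forward orbit in $A_0(\infty)$ crosses $\mathcal{A}$ exactly once, so an invariant Beltrami differential is determined by its restriction to $\mathcal{A}$, subject only to the matching of the two boundary equipotentials under $P$. Gluing the two boundary circles of $\mathcal{A}$ by the degree-$\deg P$ map $P$ produces a closed surface of Euler characteristic $0$, that is, a complex torus $\mathbb{T}$, and hence $T(S_R)=T(\mathbb{T})=\mathbb{H}$. The images of the equipotentials give a canonical foliation of $\mathbb{T}$ by parallel simple closed curves, with oriented core class $[a]$.

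The dynamical restriction is the heart of the argument. Any $h\in Mod_1(P)$ conjugates $P$ to itself, hence preserves the canonically defined Green's function, $G\circ h=G$; consequently $h$ fixes every equipotential setwise and descends to a homeomorphism of $\mathbb{T}$ fixing each leaf of the canonical foliation. An orientation-preserving mapping class of a torus that fixes the oriented homology class $[a]$ and preserves the transverse foliation is exactly a power of the Dehn twist $\tau_a$ along $a$, so I obtain an injection $Mod_1(P)\hookrightarrow\langle\tau_a\rangle\cong\mathbb{Z}$, recording the total twisting of $h$ across $\mathcal{A}$. For surjectivity I would realize the generator directly: perform a single Dehn twist supported in a subannulus of $\mathcal{A}$, propagate it to all of $A_0(\infty)$ by the relation $h\circ P=P\circ h$, and extend by the identity across the Cantor set $J(P)$. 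This map is quasiconformal, commutes with $P$, and has twist number $1$, so $Mod_1(P)=\langle\tau_a\rangle\cong\mathbb{Z}$.

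The main obstacle I expect is the passage through the critical level. Below $\{G=\eta\}$ the equipotentials split and the critical value lies on $\partial\mathcal{A}$, so I must verify that the twisting number computed at high level is a complete invariant and that the propagated twist is well defined and unobstructed across the critical point. This is precisely where the unimodal hypothesis enters: a single critical point yields a single figure-eight equipotential at level $\eta$ and the simplest, dyadic splitting below it, so the twist propagates through the critical point uniquely and contributes no further mapping-class data. This confirms both injectivity, namely that a zero-twist map is isotopic to the identity rel dynamics, and that the generator has infinite order, since $\tau_a$ acts on $T(\mathbb{T})=\mathbb{H}$ as the parabolic $\tau\mapsto\tau+1$, with quotient the punctured disk $QC(P)=T_1(P)/Mod_1(P)$.
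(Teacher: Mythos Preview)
Your overall strategy---pass to the grand-orbit quotient near $\infty$, recognize it as a (marked) torus, and read off $Mod_1(P)$ from the mapping class group---is sound and considerably more detailed than the paper's four-line sketch, which simply observes that the critical annulus $A$ between $\gamma$ and $P^{-1}(\gamma)$ must be preserved (up to isotopy) and that self-maps of an annulus are generated by a Dehn twist. The two arguments are really the same picture in different language: your torus $\mathbb{T}$ is exactly the annulus $A$ with its boundaries glued by $P$, and the paper's single generator is your $\tau_a$.

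There is, however, a genuine slip in your key reduction. From $h\circ P=P\circ h$ you conclude $G\circ h=G$ and hence that $h$ fixes each equipotential setwise. This is false for a general quasiconformal representative: $G\circ h$ still satisfies the functional equation $(G\circ h)\circ P=(\deg P)\,(G\circ h)$ and vanishes on $J(P)$, but it need not be harmonic, and there are many continuous $P$-equivariant functions with these properties (any positive continuous function on $\mathbb{T}$ pulls back to one). Concretely, H\"older bounds for a $K$-qc map only give $K^{-1}G\le G\circ h\le K\,G$. So you cannot assert that $\bar h$ preserves the equipotential \emph{foliation}.

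The fix is purely topological and costs nothing. An equipotential above the critical level is a Jordan curve separating $J(P)$ from $\infty$; since $h$ is a sphere homeomorphism preserving both $J(P)$ and $\infty$, the image is another such curve, hence isotopic to the original rel $J(P)\cup\{\infty\}$. Thus $\bar h_*[a]=[a]$ in $H_1(\mathbb{T})$, and an orientation-preserving mapping class of the torus fixing a primitive class is already a power of $\tau_a$; you do not need leafwise preservation of the foliation. Equivalently, one may first isotope $h$ rel $J(P)$ so that it sends the outer and inner boundaries of $A$ to themselves, and then argue on $A$ exactly as the paper does.

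One minor point: the surface $S_R$ in the McMullen--Sullivan sense is the grand-orbit quotient with the critical grand orbit removed, so it is a \emph{once-punctured} torus rather than a closed one. This does not affect your conclusion (its Teichm\"uller space is still $\mathbb{H}$ and $\tau_a$ still generates the relevant stabilizer), but it is worth stating correctly, and it is precisely the marked point that encodes the ``passage through the critical level'' you flag at the end.
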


\begin{figure}[htbp]
\begin{center}
\input{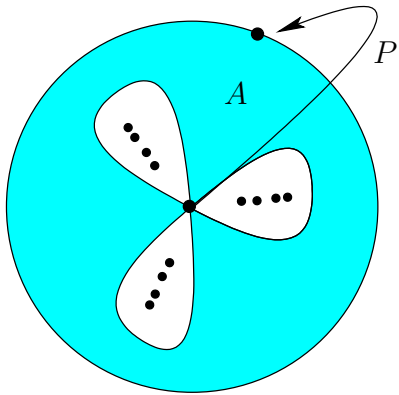}
\caption{Critical annulus for Cantor dynamics.}
\label{anillo.fig}
\end{center}
\end{figure}

\begin{proof} Let $d$ be the degree of $P$. Consider a simple close path
$\gamma$ through the critical value in the dynamical plane. The preimage of
$\gamma$ consist of $d$ closed loops, based on the critical point (see Figure
\ref{anillo.fig}). Let $A$ be the annulus defined by the intersection of
the interior of $\gamma$ with the exterior of $P^{-1}(\gamma)$. Any global
automorphism of $\C$, commuting with the dynamics of $P$, must leave the annulus
$A$ invariant. Hence, the group of such automorphisms is generated by a Dehn
twist defined on $A$.
\end{proof}

Let $S$ be a multiply connected Riemann surface with boundary such that, the
connected components of $\partial S$ are Jordan curves. The pure mapping class
group $Map(S)$ is defined by the set of topological automorphisms of
$S$, acting identically on the boundary, modulo a homotopic relation. This
homotopic relation is defined as follows, $f\sim g$ are equivalent if, and only
if, there
exist an isotopy $H$, from $f$ to $g$, such that $H|_{\partial S}=f|_{\partial
S}=g|_{\partial S}$. A classical theorem
states that, the group $Map(S)$ is generated by Dehn twists along simple
closed curves.

Let $P$ be a unimodal polynomial of degree $d$, such that the Julia set $J(P)$
is homeomorphic to a Cantor set. This is equivalent to say that the critical
orbit of $P$ escapes to infinity. Let $\gamma$ be a Jordan curve, whose
interior contains the critical value and the Julia set. The preimage
$P^{-1}(\gamma)$ consists of $d$ disjoint Jordan curves and, together with
$\gamma$, defines a $d+1$-connected Riemann surface $S_1$ with boundary. Define
recursively $S_n$ by $S_n= P^{-1}(S_{n-1})$. Then $Map(S_{n+1})$ is the $d$-fold
product of $Map(S_n)$. We have the following:

\begin{lemma}\label{direct.lem} Let $P$ be a unimodal polynomial such that
$J(P)$ is homeomorphic to a Cantor set, then $Map(S_n)$ is embedded
into $Mod_2(P)$. Thus $\varinjlim Map(S_n)$ is also embedded into $Mod_2(P)$.
\end{lemma}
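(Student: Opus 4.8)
The plan is to construct an explicit map $\Phi: Map(S_n) \to Mod_2(P)$, show it is a homomorphism, and then verify it is injective; the passage to the direct limit will then follow formally from the compatibility of these maps with the connecting homomorphisms $Map(S_n) \hookrightarrow Map(S_{n+1})$. First I would set up the realization: given a class $[f] \in Map(S_n)$, represented by a topological automorphism of $S_n$ fixing $\partial S_n$ pointwise, I want to produce a quasiconformal homeomorphism $\tilde{f}$ defined on a neighborhood $U$ of $J(P)$ commuting with $P$. Since $S_n = P^{-n}(S_0)$ and the boundary curves of $S_n$ lie in the Fatou set escaping to infinity, the region $U = \operatorname{int}(S_n) \supset J(P)$ is a natural neighborhood of the Julia set. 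The key geometric input is that $Map(S_{n+1})$ is the $d$-fold product of $Map(S_n)$, reflecting that $P: S_{n+1} \to S_n$ is an unbranched $d$-fold cover over the relevant region once $\gamma$ avoids the critical value. I would use this covering structure to propagate $f$: define $\tilde{f}$ on $S_n$ by $f$ itself, and use the functional equation $\tilde{f}(P(z)) = P(\tilde{f}(z))$ to extend $\tilde{f}$ inward toward $J(P)$ through the tower $S_n \supset S_{n+1} \supset \cdots$, exactly as in the argument of Example \ref{Blashke.sur} and the hyperbolic case of Theorem \ref{Julia.conn}.

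The crucial point for well-definedness of this propagation is that $f$ acts as the identity on $\partial S_n$. Because $f|_{\partial S_n} = \mathrm{id}$, the lifts under the covering $P$ agree on the overlapping boundary curves, so the pieces glue into a single quasiconformal homeomorphism of $U$; this is where the \emph{pure} mapping class group (acting identically on the boundary) is essential rather than the full mapping class group. Since Dehn twists along simple closed curves interior to $S_n$ can be realized by quasiconformal homeomorphisms supported in annular neighborhoods avoiding the boundary, and $Map(S_n)$ is generated by such twists, every class has a quasiconformal representative, and $\tilde{f}$ inherits quasiconformality from $f$ together with the holomorphicity of $P$. That $\Phi$ is a homomorphism follows because composition of representatives is compatible with the functional equation, and the equivalence relation defining $Mod_2(P)$ — homotopy commuting with $R$ on a smaller neighborhood — is coarser than the isotopy relation defining $Map(S_n)$.

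The main obstacle will be injectivity of $\Phi$. Suppose $[f] \in Map(S_n)$ maps to the trivial element of $Mod_2(P)$, meaning $\tilde{f}$ is homotopic to the identity on some subneighborhood $W \subset U$ with a homotopy commuting with $P$. I must deduce that $f$ is already isotopic to the identity rel $\partial S_n$ inside $S_n$, i.e. trivial in $Map(S_n)$. The difficulty is that the $Mod_2(P)$-homotopy lives on a possibly much smaller, dynamically defined neighborhood $W$, so I cannot directly read off triviality on all of $S_n$. The plan here is to use the dynamics in reverse: a homotopy of $\tilde{f}$ to the identity on $W$ commuting with $P$ can be pushed \emph{forward} by $P$ to a homotopy on $P^k(W)$ for increasing $k$, eventually covering $S_n$, precisely because the boundary is fixed and $P$ maps the tower outward with controlled (unbranched) behavior over the Fatou annuli. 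Tracking that this forward-pushed homotopy remains rel-boundary and isotopic — rather than merely homotopic — requires the classical fact that for surfaces, a homotopy of homeomorphisms fixing the boundary can be upgraded to an isotopy (Baer–Epstein type results). Combining these, triviality in $Mod_2(P)$ forces triviality of $f$ in $Map(S_n)$, giving injectivity. Finally, for the direct limit statement, since the embeddings $Map(S_n) \hookrightarrow Map(S_{n+1})$ are by construction compatible with the $\Phi$'s (both are induced by the same covering relation), the universal property of $\varinjlim$ yields an injective homomorphism $\varinjlim Map(S_n) \hookrightarrow Mod_2(P)$.
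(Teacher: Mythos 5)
Your construction of the map $\Phi_n\colon Map(S_n)\to Mod_2(P)$ --- propagating a class through the tower via the functional equation $\tilde f\circ P=P\circ\tilde f$, using that $f$ fixes $\partial S_n$ pointwise so that the lifts over the unbranched covering glue --- is essentially the paper's construction, which phrases the same propagation in terms of spreading the generating Dehn twists along the grand orbits of their core curves. Where you genuinely diverge is in the proof of injectivity, and that is where there is a gap.

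Your plan is to take a trivializing homotopy $H_t$ of $\tilde f$ on a small neighborhood $W\supset J(P)$, commuting with $P$, and push it forward by the dynamics to a homotopy on $P^k(W)\supset S_n$. This pushforward is not well defined: for $x\in P^k(W)\setminus W$ and two preimages $y,y'\in W$ with $P^k(y)=P^k(y')=x$, you would need $P^k(H_t(y))=P^k(H_t(y'))$ for all $t$. This holds at $t=0$ and $t=1$ because $\tilde f$ and the identity commute with $P$, but for intermediate $t$ the relation $H_t\circ P=P\circ H_t$ is only available where both sides are defined (essentially inside $W$); nothing forces $H_t$ to carry fibers of $P^k$ to fibers, so $t\mapsto P^k(H_t(y))$ and $t\mapsto P^k(H_t(y'))$ are merely two paths from $\tilde f(x)$ to $x$ and need not coincide. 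A second problem: even granting an extension of the homotopy to all of $S_n$, it would only be a free homotopy commuting with $P$, not a homotopy rel $\partial S_n$; since $Map(S_n)$ is the \emph{pure} mapping class group, free triviality does not imply triviality there (a Dehn twist about a boundary-parallel curve is the standard counterexample), and Baer--Epstein only upgrades homotopy to isotopy, it does not restore the rel-boundary condition. The paper sidesteps both issues by arguing with an invariant: distinct classes in $Map(S_n)$ have distinct rotation (twisting) numbers about the simple closed curves of $S_n$, these numbers are preserved when the class is propagated by the conformal dynamics, and they can be read off on curves of $P^{-m}(\gamma)$ arbitrarily close to $J(P)$, hence on any neighborhood carrying a representative of the image class. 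If you want to keep your strategy, you would need to replace the pointwise pushforward by such a homotopy-invariant quantity measured deep inside $W$ --- at which point you have essentially reconstructed the paper's argument.
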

\begin{proof}
The embedding from $S_n$ to $S_{n+1}$, induces a monomorphism
from the
group $Map(S_n)$ to the group $Map(S_{n+1})$. To conclude the lemma,
we show that every element in $Map(S_n)$ induces a non-trivial element
in $Mod_2(P)$. Let $\tau$ be a Dehn twist along a simple closed curve
$\gamma$. Using dynamics of $P$, we propagate $\tau$ along the great orbit of
$\gamma$. This defines an element in $Mod_2(P)$. Thus, we have a map
$\Phi_n:Map(S_n) \rightarrow Mod_2(P)$. If $\tau\neq \tau'$ in $Map(S_n)$, then
$\tau$ and $\tau'$ have different rotation numbers along the same curves. But,
this property is preserved by the dynamics of $P$ and then
$\Phi_n(\tau)\neq\Phi_n(\tau')$. So $\Phi_n$ is an injective map.
\end{proof}
Note that if we consider, instead of $Map(S_n)$, the group of automorphisms
of $S_n$, not necessarily acting identically on $\partial S_n$. Then,
on the corresponding product, it appears the action of a braiding group.

It is not clear that every element in $Mod_2(R)$, acting identically on
$J(R)$, should be homotopic to some element in $Map(S_n)$. Moreover, $Mod_2(R)$
consists of elements that have a simplicial extension, this relates the modular
group $Mod_2(R)$ with Thompson's group of automorphisms of the Cantor set.

In general, $T_2(R)$ is not path connected. Since $T_2(R)$ contains $H(T_1(R))$
and the orbit of $H(T_1(R))$ under the action of $Mod_2(R)$. Locally, the orbit
space is homeomorphic to $Mod_2(R)/\alpha(Mod_1(R))$. Thus we have

\begin{lemma} If the homomorphism $\alpha$ is not surjective, the space
$$Mod_2(R)/\alpha(Mod_1(R))$$ is totally disconnected.
\end{lemma}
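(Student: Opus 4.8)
The plan is to show that the path components of $T_2(R)$ are separated by a family of locally constant, integer-valued invariants, so that the orbit space carries a basis of clopen sets and is therefore totally disconnected. By Theorem \ref{surjec}, when $\alpha$ is not surjective the path components of $T_2(R)$ are exactly the translates $\phi\cdot H(T_1(R))$, and two of them coincide precisely when the corresponding elements $\phi$ lie in the same left coset of $\alpha(Mod_1(R))$. Thus the orbit space is in bijection with $G/K:=Mod_2(R)/\alpha(Mod_1(R))$, and, as noted before the statement, it carries the quotient topology induced by the Teichm\"{u}ller metric. It suffices to produce, for this quotient topology, a separating family of clopen sets.

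First I would attach to each element of $Mod_2(R)$ its system of Dehn-twisting numbers along the grand orbit of the finitely many Jordan curves bounding the surfaces $S_n$ of Lemma \ref{direct.lem}. Each such number is an integer, additive under composition, and it is unchanged by the homotopies (commuting with $R$) defining the equivalence relation in $Mod_2(R)$: a homotopy cannot alter an integer Dehn-twist count. Hence each twisting coordinate is a well defined function $Mod_2(R)\to\mathbb{Z}$, and, viewed on $T_2(R)$, it is locally constant, since a sufficiently small Teichm\"{u}ller perturbation keeps every such integer fixed. By Lemma \ref{Dehn.lem}, the subgroup $K=\alpha(Mod_1(R))$ is generated by the single global Dehn twist, so $K$ is exactly the locus of coherently propagated twisting; the twisting coordinates therefore descend to $G/K$ and continue to separate distinct cosets.

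With this in place the conclusion is formal. For each finite set of curves, fixing the corresponding twisting integers cuts out a ``cylinder'' in the orbit space; since each coordinate is locally constant with discrete target $\mathbb{Z}$, every such cylinder is simultaneously open and closed, these cylinders form a basis, and any two distinct cosets are separated by one of them. A space admitting a separating family of clopen sets is totally disconnected, which proves the lemma. The main obstacle is the second paragraph: one must verify carefully that the twisting signature is genuinely a homotopy invariant and that $K$ coincides with the coherent diagonal, that is, that the abstract quotient topology coming from the Teichm\"{u}ller metric agrees with the product (inverse-limit) topology on the twisting data. Once the twisting coordinates are known to be locally constant, openness of the cylinders --- and hence total disconnectedness --- follows, while their closedness and the basis property are immediate. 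This also explains why the statement asserts total disconnectedness rather than discreteness: two distinct cosets whose twisting patterns first differ deep in the preimage tree give path components that are arbitrarily close in the Teichm\"{u}ller metric, so the components are not open and the orbit space is genuinely Cantor-like.
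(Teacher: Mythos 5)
Your proposal takes a genuinely different and much heavier route than the paper, and as written it has real gaps. The paper's own proof is a two-line homotopy argument: a path $\sigma:[0,1]\rightarrow Mod_2(R)/\alpha(Mod_1(R))$ induces a homotopy, commuting with $R$, between representatives of $\sigma(0)$ and $\sigma(1)$; since homotopy commuting with $R$ is precisely the equivalence relation defining $Mod_2(R)$ (and a fortiori the coset relation), $\sigma$ must be constant. No twisting invariants, no exhaustion by surfaces, no local constancy is needed.

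The concrete gaps in your argument are these. First, the lemma is stated for an arbitrary rational map $R$ with $\alpha$ not surjective, but your twisting coordinates are built from the surfaces $S_n$ and Lemmas \ref{Dehn.lem} and \ref{direct.lem}, which the paper only sets up for unimodal polynomials whose Julia set is a Cantor set; your proof therefore does not cover the general case. Second, and more seriously, the separation step --- that two elements of $Mod_2(R)$ with identical twisting data along all curves lie in the same coset of $\alpha(Mod_1(R))$ --- is exactly the point the paper itself flags as open (``It is not clear that every element in $Mod_2(R)$, acting identically on $J(R)$, should be homotopic to some element in $Map(S_n)$''); without it your clopen cylinders need not separate distinct cosets and the argument collapses. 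Third, local constancy of each twist coordinate in the quotient of the Teichm\"{u}ller topology is asserted rather than proved, and it is delicate: the very next theorem in the paper shows this quotient is perfect, so distinct cosets do accumulate, and one must genuinely rule out that an arbitrarily small Teichm\"{u}ller perturbation changes the twist number along a fixed curve. If all three points were repaired, your argument would in fact prove something stronger than the paper's (a basis of clopen sets, hence zero-dimensionality, rather than only the triviality of path components), but in its present form it does not close; the short way through is to observe that a path in the quotient is itself a homotopy commuting with $R$, which is the defining equivalence, so every path is constant.
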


\begin{proof} Assume that there is a path $\sigma:[0,1]\rightarrow
Mod_2(R)/\alpha(Mod_1(R))$. But, the path $\sigma$ induces a homotopy between
$\sigma(0)$ and $\sigma(1)$, for all $t\in [0,1]$. Hence $\sigma$ is a constant
map in $Mod_2(R)$. \end{proof}

\begin{theorem} Let $P$ be a hyperbolic unimodal polynomial such that $J(P)$
is homeomorphic to a Cantor set. Then $$T_2(P)=H(T_1(P))\times
\{ Mod_2(P)/\alpha(Mod_1(P))\}.$$
Moreover, the space $$Mod_2(P)/\alpha(Mod_1(P))$$ is perfect.
\end{theorem}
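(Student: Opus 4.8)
The plan is to establish the two assertions separately: the product decomposition follows from the component structure of $T_2(P)$ supplied by Theorem~\ref{surjec}, while the perfectness follows from the homogeneity of the quotient together with the existence of arbitrarily deep ``incoherent'' twists.

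For the product structure, recall from Theorem~\ref{surjec} that, in the present situation where $\alpha$ fails to be surjective, the space $T_2(P)$ is decomposed into its path components, each of which is a $Mod_2(P)$-translate $\phi\,H(T_1(P))$ of $H(T_1(P))$; two such translates coincide when $[\phi]=[\psi]$ in $Q:=Mod_2(P)/\alpha(Mod_1(P))$ and are disjoint otherwise. (That $\alpha$ is not surjective here is immediate: $\alpha(Mod_1(P))$ is cyclic by Lemma~\ref{Dehn.lem}, whereas $Mod_2(P)$ contains the infinitely generated group $\varinjlim Map(S_n)$ by Lemma~\ref{direct.lem}.) Thus $T_2(P)$ is the disjoint union of the translates indexed by $Q$, and choosing a section of the projection $Mod_2(P)\to Q$ produces a bijection $F\colon H(T_1(P))\times Q\to T_2(P)$, $F(x,[\phi])=\phi(x)$; here $H(T_1(P))$ is identified with $T_1(P)/\ker\alpha$ by Proposition~\ref{ker.a}.

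I then have to promote $F$ to a homeomorphism for the product topology. Two inputs are available: the group $Mod_2(P)$ acts on $T_2(P)$ by isometries, exactly as $Mod_1(P)$ acts on $T_1(P)$, so each slice $F(\,\cdot\,,[\phi])$ is an isometric copy of $H(T_1(P))$; and $Q$ is totally disconnected by the Lemma preceding the statement. The main obstacle in this half is local triviality, namely that the Teichm\"uller metric of $T_2(P)$ splits, up to uniform equivalence, into the leafwise metric of $H(T_1(P))$ and a transverse metric on $Q$, independently of the chosen leaf. I would derive this from the homogeneity of the isometric action: since $Mod_2(P)$ carries any leaf isometrically onto any other, the transverse geometry near one leaf is the same as near any other, so the disjoint-union decomposition assembles into a genuine product rather than merely a set-theoretic one.

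For perfectness I first note that $Q=Mod_2(P)/\alpha(Mod_1(P))$ is homogeneous, since $Mod_2(P)$ acts transitively on it by left translation; hence $Q$ is perfect precisely when the identity coset is not isolated, that is, when $Q$ is not discrete. It therefore suffices to produce, for every $\varepsilon>0$, an element of $Mod_2(P)\setminus\alpha(Mod_1(P))$ whose class lies within $\varepsilon$ of the identity in $Q$. Using Lemma~\ref{direct.lem} together with the description of $\alpha(Mod_1(P))$ as the global twist generated by $\tau_A$ (Lemma~\ref{Dehn.lem}), I would build such elements as deep twists: writing $J(P)\cong\{0,1\}^{\mathbb N}$ via symbolic dynamics and using B\"ottcher coordinates on the escaping set, I twist along curves lying in the $k$-th preimage generation $P^{-k}(A)$ of the critical annulus, combined so that their forward images cancel and the resulting configuration is incoherent, hence not a power of $\tau_A$ and not in $\alpha(Mod_1(P))$.

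The heart of the matter, and the step I expect to be hardest, is the quantitative claim that these deep incoherent elements become transversally small, i.e. $d_Q([\gamma_k],[\mathrm{id}])\to 0$ as the depth $k\to\infty$. Concretely, I must show that after correcting $\gamma_k$ by a suitable power of the global generator $\tau_A$, the resulting map is realized quasiconformally with distortion tending to $1$; equivalently, that deep incoherent twists are increasingly well approximated in the Teichm\"uller metric by coherent ones. I expect this to follow from the expanding, self-similar geometry of hyperbolic Cantor dynamics, which controls the moduli of the annuli $P^{-k}(A)$ and allows the deep twisting to be absorbed up to a shrinking error; the required modulus bookkeeping is the delicate point. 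Granting it, no coset is isolated, so $Q$ is perfect, and since $T_2(P)$ is a complete metric space the factor $Q$ is complete as well, whence $Q$ is a totally disconnected, perfect, complete metric space---a Cantor set.
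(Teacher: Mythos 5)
Your perfectness argument has a genuine gap, and it is one you flag yourself: the whole proof hinges on the quantitative claim that the deep incoherent twists $\gamma_k$ satisfy $d_Q([\gamma_k],[\mathrm{id}])\to 0$, i.e.\ that after correction by coherent twists their distortion tends to $1$, and you only ``grant'' this. The paper's proof shows this estimate is not needed. Instead of forcing the identity coset to be a limit of other cosets directly, one takes for each $n$ the Dehn twist $g_n$ of angle $2\pi$ along a single component $\gamma_n$ of $P^{-n}(\gamma)$, acting as the identity around the other components. These lie in pairwise distinct cosets of $\alpha(Mod_1(P))$, and crucially $K(g_n)=K(g_1)$ for all $n$, so the points $g_n(\mathrm{Id})$ all lie in the fixed ball $B(\mathrm{Id},K(g_1)+1)$ of $T_2(P)$. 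Compactness of families of quasiconformal maps with uniformly bounded distortion then yields an accumulation point \emph{somewhere} in that ball, hence an accumulation point in $Q=Mod_2(P)/\alpha(Mod_1(P))$; transitivity of the $Mod_2(P)$-action on $Q$ (your homogeneity observation) transports that accumulation point to every coset, giving perfectness with no distortion-tending-to-one estimate. So your reduction ``perfect iff the identity coset is non-isolated'' is sound, but the route you choose to verify non-isolation is the hard one, and the step you leave open is precisely the content of the claim.

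For the product decomposition your set-theoretic bijection is fine and agrees with the paper: the components are the $Mod_2(P)$-translates of $H(T_1(P))$ indexed by $Q$, with $H(T_1(P))\cong T_1(P)$ because $\alpha$ is injective by Lemma \ref{Dehn.lem} and Proposition \ref{ker.a}. But ``homogeneity of the isometric action'' does not by itself give local triviality: it tells you all leaves look alike, not that a small ball in $T_2(P)$ meets each leaf in at most one plaque. The paper supplies the missing separation via proper discontinuity of the $Mod_1(P)$-action on $T_1(P)$: there is $r_0>0$ such that $B(\mathrm{Id},r_0)$ injects into $QC(P)$, hence into $Hyp(P)$, and its image is evenly covered by the projection $T_2(P)\to Hyp(P)$; the component $U$ of the fiber over this ball containing $\mathrm{Id}$ meets $Q$ only in $\mathrm{Id}$, so the identity has a neighborhood of the form $U\times Q$, and homogeneity then propagates this to every point. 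You should replace the appeal to homogeneity alone by this even-covering argument, or some equivalent local separation of the translates.
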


\begin{proof}
Let us check first, that the space $Mod_2(P)/\alpha(Mod_1(P))$ is perfect. By
Lemma \ref{direct.lem}, $Mod_2(P)$ contains $\varinjlim Map(S_n)$. Let
$\gamma_1$ be simple closed curve in $S_n$. For
every $n>1$, choose a component $\gamma_n$ of $P^{-n}(\gamma)$. Let $g_n$ be
the map, in $Mod_2(P)$, induced by the Dehn twist of angle $2\pi$ along
$\gamma_n$, and acting as the identity around all other components of
$P^{-n}(\gamma)$.

Then, the maps $g_n$ are different, and by construction, $g_n$ can not be
extended to a globally defined element in $Mod_1(P)$ commuting with dynamics.
Moreover, the $g_n$ belong to different orbits of the action of
$\alpha(Mod_1(P))$. Thus, the maps $g_n$ induce different elements in
$Mod_2(P)/\alpha(Mod_1(P))$. Moreover, the distortions satisfy $K(g_n)=K(g_1)$,
for all $n$. Hence, the map $g_n(Id)$ belongs to the ball $B(Id,K(g_1)+1)$ in
$T_2(P)$.
This implies that, there exist an accumulation point in
$B(Id,K(g_1)+1)$, and thus there is an accumulation point in 
$Mod_2(P)/\alpha(Mod_1(P))$. Since, the action of the group $Mod_2(P)$ is
transitive in $Mod_2(P)/\alpha(Mod_1(P))$, it follows that the fiber
$Mod_2(P)/\alpha(Mod_1(P))$ is perfect.

By Lemma \ref{Dehn.lem}, the map $\alpha$ is injective and, then
$H(T_1(P))\simeq T_1(P)$ by Proposition \ref{ker.a}. Since
$Mod_1(P)$ acts properly discontinuously on $T_1(P)$, there exist
$r_0>0$ such that the ball $B(Id,r_0)$, in $T_1(P)$, projects injectively into
$QC_(P)$. Then $B(Id,r_0)$ embeds injectively on $Hyp(P)$, and the image of
$B(Id,r_0)$ in $Hyp(P)$
is evenly covered by the projection of $T_2(P)$ into $Hyp(P)$. Let
$U$ be the open component, in the fiber of $B(Id,r_0)$, containing the
identity in $T_2(P)$. By
construction, $U\cap Mod_2(P)/\alpha(Mod_1(P))=Id$.
Then, $Id$ has a neighborhood in $T_2(P)$ of the form $$U\times
Mod_2(P)/\alpha(Mod_1(P)).$$ The same argument works for every
$x\in T_2(P)$. Hence $T_2(P)$ is homeomorphic to the product $T_1(P)\times
Mod_2(P)/\alpha(Mod_1(P))$.
\end{proof}

Conceivably, $T_2(R)$ is also locally a product when $R$ is a
rational map with disconnected Julia set. However, in this case, a non-trivial
monodromy can appear along $H(T_1(R)).$

Now we will see that the connectivity of $J(R)$ is related to the injectivity of
$\alpha:Mod_1(R)\rightarrow Mod_2(R)$.

\begin{theorem} Let $P$ be a hyperbolic polynomial. The map
$\alpha:Mod_1(P)\rightarrow Mod_2(P)$ is injective if, and only if, $P$ is
unimodal and the Julia set $J(P)$ is homeomorphic to a Cantor set.
 \end{theorem}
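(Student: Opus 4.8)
The plan is to prove the two implications separately, treating the direction ``$P$ unimodal with Cantor Julia set $\Rightarrow\alpha$ injective'' as essentially a consequence of the machinery already in place, and the converse by contraposition, producing an explicit nontrivial element of $\ker\alpha$ in each of the two ways the hypothesis can fail. For the forward direction I would invoke Lemma \ref{Dehn.lem}, which gives $Mod_1(P)=\langle\tau\rangle$ with $\tau$ the Dehn twist along the critical annulus $A$ of Figure \ref{anillo.fig}; it then suffices to show that $\alpha(\tau)$ has infinite order in $Mod_2(P)$. Propagating $\tau$ along the grand orbit exactly as in Lemma \ref{direct.lem} and reading off the rotation numbers of $\alpha(\tau^{k})$ along the curves $P^{-n}(\partial A)$ that enclose the Cantor pieces of $J(P)$, one finds these numbers to be $k$ times a fixed nonzero constant. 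Since rotation numbers are preserved by the dynamics, $\alpha(\tau^{k})\neq\mathrm{Id}$ for $k\neq 0$, so $\alpha$ is injective. This is precisely the injectivity already used in the product-structure theorem above.

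For the converse I argue the contrapositive: assuming $P$ is not unimodal or $J(P)$ is not a Cantor set, I exhibit $\xi\neq\mathrm{Id}$ in $\ker\alpha$, that is, a global map commuting with $P$ which is trivial near $J(P)$ but nontrivial in $Mod_1(P)$. \textbf{Case 1: $J(P)$ is not totally disconnected.} For hyperbolic $P$ this forces some critical point to have bounded orbit and hence, by hyperbolicity, to lie in a bounded periodic Fatou component $W$ on which an iterate of $P$ is conjugate to a Blaschke product, as in Example \ref{Blashke.sur}. Let $\tau_W$ be the Dehn twist along the core of a fundamental annulus of the dynamics on $W$, propagated over the grand orbit, and let $\tau_\infty$ be the analogous twist along a fundamental annulus of the dynamics on the basin of infinity $A(\infty)$, chosen to accumulate on the same boundary arc of $J(P)=\partial W$. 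On a two-sided neighborhood $U$ of that arc the composite $\xi=\tau_W\circ\tau_\infty^{-1}$ has total twist $0$ across $U$, since the twist from the $W$-side cancels the opposite twist from the $A(\infty)$-side; by the mapping-class computation for an annulus it is therefore homotopic to the identity on $U$, and by the symmetry of the construction the homotopy can be arranged to commute with $P$, so $\alpha(\xi)=\mathrm{Id}$. On the other hand $\xi$ acts as a nontrivial mapping class on the quotient surface of $W$ by the dynamics, so $\xi\neq\mathrm{Id}$ in $Mod_1(P)$, and $\ker\alpha\neq 0$.

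\textbf{Case 2: $J(P)$ is a Cantor set but $P$ has at least two distinct critical points $c_1,c_2$}, necessarily escaping. Here I would produce, around $c_1$ and $c_2$, two critical annuli $A_1,A_2$ of the type in Lemma \ref{Dehn.lem}, with associated twists $\tau_1,\tau_2$, and compare their images under $\alpha$. The guiding idea is that near $J(P)$ each $\tau_i$ reduces to the twisting dictated by the single outermost annulus of the superattracting fixed point at $\infty$ (all of $J(P)$ lies below it, and the Cantor structure is rebuilt from it by the dynamics), so that $\alpha(\tau_1)=\alpha(\tau_2)$; whereas $\tau_1\tau_2^{-1}$ twists with opposite signs around $c_1$ and $c_2$ and is thus detected by the rotation number along a curve separating the two critical points. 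This would give $\tau_1\tau_2^{-1}\in\ker\alpha\setminus\{\mathrm{Id}\}$, completing the contrapositive.

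The hard part will be Case 2. Making rigorous the claim that the two critical twists have the same image under $\alpha$ requires controlling the combinatorics of the basin of infinity — the relative levels at which the two critical orbits escape, and the way the pullbacks of the outermost annulus branch at the critical levels — which is naturally organized by Branner--Hubbard tableaux; without this one cannot be sure that $\alpha(\tau_1)$ and $\alpha(\tau_2)$ genuinely coincide rather than merely being commensurable. A secondary but real technical point, needed in Case 1, is to confirm that the cancelling homotopy on $U$ can be chosen to commute with the dynamics; I expect this to follow by performing the homotopy on one fundamental annulus and propagating it, but it must be verified carefully because the fundamental annuli accumulate on $J(P)$ from both sides.
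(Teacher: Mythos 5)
Your forward direction is essentially the paper's: Lemma \ref{Dehn.lem} identifies $Mod_1(P)$ as the cyclic group generated by the critical-annulus twist $\tau$, and injectivity of $\alpha$ follows once the propagated twist is seen to be of infinite order in $Mod_2(P)$; your remark that one needs infinite order, not merely $\alpha(\tau)\neq Id$, together with the rotation-number argument of Lemma \ref{direct.lem}, is if anything more careful than the paper's one-line assertion. In Case 1 of your converse, however, you do more work than necessary, and the extra work is exactly where your acknowledged technical difficulty lives. The paper takes the single twist $\tau_W$ supported in the bounded immediate basin $A(z_0)$, propagated along its grand orbit, and observes via the mechanism of Example \ref{Blashke.sur} that its restriction to a neighborhood of $J(P)$ is already homotopic to the identity through a dynamics-compatible homotopy: on the basin-of-infinity side $\tau_W$ is literally the identity, and on the $A(z_0)$ side only a bounded amount of total twisting survives in any neighborhood of $\partial A(z_0)$. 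No cancellation against a second twist $\tau_\infty^{-1}$ is needed, so the problem of aligning two families of fundamental annuli accumulating on $J(P)$ from opposite sides, and of making the cancelling homotopy commute with $P$, never arises. Your $\xi=\tau_W\circ\tau_\infty^{-1}$ is plausible but imports an obstruction the intended argument avoids.

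The genuine gap is your Case 2. The claim $\alpha(\tau_1)=\alpha(\tau_2)$ for the twists attached to two distinct escaping critical points is the entire content of the ``only if, $P$ is unimodal'' half of the theorem, and you explicitly do not prove it; it is not clear that the two critical twists become equal in $Mod_2(P)$, rather than merely conjugate or commensurable, and the Branner--Hubbard combinatorics you would need is nowhere set up. As written, the proposal therefore does not establish that injectivity of $\alpha$ forces unimodality. It is worth saying that the paper's own converse is silent on precisely this point: it shows only that a non-escaping critical point produces a nontrivial element of $\ker\alpha$, and then concludes that all critical points escape, never deriving unimodality from injectivity. So you have correctly located a real issue in the statement's proof, but locating it is not closing it, and your argument leaves the theorem unproved in this case.
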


\begin{proof}
Let us assume that $P$ is unimodal, and $J(P)$ is homeomorphic to a Cantor set.
By Lemma \ref{Dehn.lem}, the modular group $Mod_1(P)$ is cyclically generated
by a Dehn twist $\tau$, but $\tau$ is non-trivial in $Mod_2(P)$. Hence,
$\alpha$ is injective.

Reciprocally, by the hyperbolicity of $P$, the critical point in $\C$ is
attracted to a periodic cycle in the plane. Without loss of generality, we can
assume that this periodic cycle is a fixed point $z_0$. The immediate basin of
attraction $A(z_0)$ is a topological disk. Let $A$ be an annulus inside
$A(z_0)$, with center at $z_0$, such that $P$ maps the outer boundary of $A$ to
the inner boundary of $A$. Consider a Dehn twist along the core curve of $A$ and
propagate it along its grand orbit using dynamics. The resulting  map $\tau$ is
a non trivial element of $Mod_1(P)$. However, as we saw in Example
\ref{Blashke.sur}, near the boundary of $A(z_0)$, $\tau$ is
homotopic to the identity. Thus, $\tau$ is the identity in $Mod_2(P)$ and
$\alpha$ is not injective in $Mod_1(P)$. Then, the critical point is
attracted to infinity. Hence, the Julia set $J(P)$ is homeomorphic to a
Cantor set. \end{proof}

\begin{Example}

A useful example is $f_{10}(z)=z^2+10$. In this case, the Julia set is a
Cantor set. So the group $Mod_1(f_{10})$ is cyclically generated by a Dehn
twist. It follows that $T_1(f_{10})$ is homeomorphic to the puncture unit disk.
The quotient space is equivalent to the complement of the Mandelbrot set $M$. It
is well known, that $\C\setminus M$ is holomorphically equivalent to the
puncture unit disk. Also, since $Mod_2(f_{10})$ is infinitely generated, the
homomorphism $\alpha$ is not surjective.

\end{Example}

\section{Inverse limits of rational functions and its deformations.}

Let us consider a rational map $R:\bar{\C}\rightarrow \bar{\C}$ acting on the
Riemann sphere. The inverse limit, or natural extension of $R$, is the space
$$\mc{N}_R=\{\h{z}=(z_1,z_2,...)\in \prod_{n\in \mathbb{N}} \bar{\C}:
R(z_{n+1})=z_n \}$$
endowed by Tychonoff topology as a subspace of $\prod_{n\in \mathbb{N}}
\bar{\C}$. There is a family of natural projections $p_n:\mc{N}_R\rightarrow
\bar{\C}$ given by $p_n(\h{z})=z_n$, also a natural extension of $R$, denoted
by $\h{R}:\mc{N}_R\rightarrow \mc{N}_R$ such that $p_n\circ \h{R}=R\circ p_n$.
To simplify notation, let us put $p:=p_1.$

The space $\mc{N}_R$ was studied by Lyubich and Minsky in \cite{LM}. In that
paper, Lyubich and Minsky showed that for a general rational map, the natural
extension is decomposed into two spaces; the \textit{regular part} $\mc{R}_R$,
which consist of the points that admit a Riemannian structure compatible with
the maps $p_n$, and the complement of $\mc{R}_R$ called the \textit{irregular
part}. A \textit{leaf} $L$ is a path-connected component in $\mc{R}_R$. Every
leaf is a Riemann surface. A theorem by Lyubich and Minsky shows that, in
$\mc{R}_R$, there is a family of leaves, such that, each leaf in this family is
conformally equivalent to the plane and it is dense in $\mc{N}_R$.

The authors of \cite{LM}, proved that there is a class of rational maps $R$,
which contains all hyperbolic maps, such that $\mc{R}_R$ is a lamination by
Riemann surfaces. That is, it admits an atlas of charts $(U,\phi)$, where $\phi$
is a homeomorphism from $U$ to $\mathbb{D}\times T$. Changes of coordinates are
conformal on the horizontal direction, and continuous in the transversal
direction. This structure is consistent with the fibration induced by the family
of maps $p_n$.

Let $P(R)$ denote the \textit{postcritical set} of $R$. If $z_0$ is a given
point in $\C\setminus P(R)$ then, a construction due to Poincar\'e gives
a representation of the fundamental group $\pi_1(\C\setminus P(R),z_0)$, into
the automorphisms group of the fiber $p^{-1}(z_0)$. The image of this
representation is called
the \textit{monodromy group} of $\mc{N}_R$.  Because of the irregular part, the
natural extension is not the suspension of $\C$ by the monodromy group on the
fiber $p^{-1}(z_0)$.

\subsection{Deformations of inverse limits.}

Consider an open neighborhood $U$ of the Julia set $J(R)$, we call
the fiber $p^{-1}(U)$ a \textit{maximal flow box} for $\mc{N}_R$.
The action of the monodromy group induces identifications on a
maximal flow box. We say that $\mc{N}_R$ can be represented by a
maximal flow box and the action of monodromy if, $\mc{N}_R$ coincides
with the end compactification of the orbit, of a maximal flow box, by
the action of monodromy. It is not clear if, in general, the natural
extension of a rational map $R$ can be represented by a maximal flow
box. However, this is true when $R$ is hyperbolic. In this case, the
regular part is a Riemann surface lamination and the irregular part
is finite, see \cite{LM}.

From now on, we assume that $\mc{N}_R$ is represented by a maximal
flow box. Then any conjugacy, around a neighborhood of the Julia set
of $R$, can be extended to a homeomorphism of the whole laminations.
This suggests that, we can extend the equivalence class of elements
in $T_2(R)$ to equivalence classes of laminations. In this sense,
the monodromy and the dynamics characterize laminations. Then,
deformations of the whole lamination are determined by deformations
of a maximal flow box.

Let $U$ be a neighborhood in $\mc{N}_R$, we call a \textit{plaque} a
path component of $U\cap \mc{N}_R$. A map $\gamma$, continuously
defined on plaques or open neighborhoods in $\mc{N}_R$, is called
a \textit{fiber automorphism} if $p\circ \gamma=p$. Since $p$ is
holomorphic in the regular part, it implies that $\gamma$ is
holomorphic in $\mc{R}_R$. Given a fiber automorphism and a leaf
$L$ in $\mc{R}_R$, we denote by $\gamma_L$ the restriction of
$\gamma$ to $L$ when is defined.

\begin{definition} Let $U$ be a neighborhood in $\C$ and $F=p^{-1}(U)$ a
flow box. A family $\{\mu_L\}$ of Beltrami differentials, defined on
$F$, is called compatible with the fiber structure if, for every
fiber automorphism $\gamma$ and every leaf $L$ in $\mc{R}_R$, we
have on $F\cap \gamma(F)\cap
L$,$$\mu_{\tilde{L}}(\gamma_L)\frac{\bar{
\gamma'_L}}{\gamma'_L}=\mu_{\tilde{L}},$$ where $\gamma_L$ sends $L$
into $\tilde{L}$.
\end{definition}

Let $\{R^{-n}\}$ be the family of branches of $R$, then deck
transformations of the family of branches are fiber automorphisms.
Moreover, all fiber automorphisms are generated by deck
transformations of branches of $R$.

\begin{lemma}\label{lem.comp}
Let $\mu=\{\mu_L\}$ be a family of Beltrami differentials in
$\mc{R}_R$, then $\{\mu_L\}$ is compatible with the fiber structure
if, and only if, $p_*\circ p^*(\mu)=\mu$.
\end{lemma}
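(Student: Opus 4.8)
The plan is to show that compatibility with the fiber structure is nothing more than the invariance of $\mu$ under every fiber automorphism, and that this invariance is in turn equivalent to $\mu$ being the $p$-pullback of a single Beltrami differential on the base — which is exactly what the round-trip operator in the statement records. Throughout I work inside the maximal flow box $F=p^{-1}(U)$ and use three facts already available: $p$ is holomorphic on $\mc{R}_R$, so each restriction $p|_L\colon L\to\C$ is a local biholomorphism onto an open subset of $U$; the fiber automorphisms $\gamma$ satisfy $p\circ\gamma=p$ and are generated by the deck transformations of the branches of $R$; and consequently, on an overlap, $\gamma_L=(p|_{\tilde L})^{-1}\circ(p|_L)$, so that a fiber automorphism is precisely a change of sheet over a fixed base point.

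First I would pin down the two operators. For a Beltrami differential $\nu$ on $U$, its pullback is the leafwise family $(p^*\nu)_L=(p|_L)^*\nu$; by functoriality and $p\circ\gamma=p$ this family is automatically compatible, since for any $\gamma$ sending $L$ to $\tilde L$ one has $\gamma_L^*(p^*\nu)_{\tilde L}=(p|_{\tilde L}\circ\gamma_L)^*\nu=(p|_L)^*\nu=(p^*\nu)_L$. Conversely, for a leafwise family $\mu$ the pushforward $p_*\mu$ is defined on $U$ by transporting $\mu_L$ through the conformal chart $p|_L$; because any sheet over a base point is carried to any other by a fiber automorphism, $p_*\mu$ is single-valued on $U$ if and only if the values produced by the various sheets agree, that is, if and only if $\gamma_L^*\mu_{\tilde L}=\mu_L$ for all $\gamma$. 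Finally, since $p|_L$ is a biholomorphism, pushforward and pullback through it are mutually inverse, so $(p^*p_*\mu)_L=\mu_L$ on each leaf wherever $p_*\mu$ is defined.

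With these in hand the equivalence follows at once. If $\mu$ is compatible, the displayed compatibility identity is exactly the statement that $p_*\mu$ is well-defined on all of $U$; applying $p^*$ and using that the chart round-trip is the identity on each leaf yields $p_*\circ p^*(\mu)=\mu$ (here the composite is read as: push $\mu$ down to $U$, then pull it back to the leaves). Conversely, suppose $p_*\circ p^*(\mu)=\mu$, so that $\mu=p^*\nu$ with $\nu=p_*\mu$ a genuine Beltrami differential on $U$. Then for any fiber automorphism $\gamma$ sending $L$ to $\tilde L$, functoriality together with $p\circ\gamma=p$ gives $\gamma_L^*\mu_{\tilde L}=\gamma_L^*(p|_{\tilde L})^*\nu=(p|_{\tilde L}\circ\gamma_L)^*\nu=(p|_L)^*\nu=\mu_L$, which is precisely the compatibility condition. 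This settles both implications.

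The step I expect to be the main obstacle is the well-definedness of the pushforward $p_*$: a leafwise Beltrami differential has a priori one pushforward per sheet over each base point, and these need not agree. Controlling this requires that the restrictions $p|_L$ be genuine conformal charts, so that transport on each sheet is unambiguous, and that any two sheets over a common base point be interchanged by a fiber automorphism, so that the single family of compatibility identities forces global single-valuedness rather than mere agreement among a privileged set of sheets. For the latter I would lean on the stated fact that all fiber automorphisms are generated by deck transformations of branches of $R$, together with the Lyubich--Minsky description of $\mc{R}_R$ as a lamination, making the monodromy action sufficiently transitive on fibers; a fully rigorous treatment would also have to address base points in the postcritical set and the irregular part, which is where the argument is most delicate.
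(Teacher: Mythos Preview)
Your proposal is correct and follows essentially the same route as the paper: both directions rest on the observation that compatibility with the fiber structure is precisely invariance under the deck transformations of branches of $R$, which is equivalent to the leafwise pushforward being single-valued on $U$, whence the push-then-pull round trip returns $\mu$. Your treatment is considerably more detailed than the paper's very brief argument, and you correctly read the composite as ``push down, then pull back'' --- the paper indeed uses $p^{*}$ for the pushforward and $p_{*}$ for the pullback, opposite to the standard convention you adopt, so your parenthetical interpretation matches the paper's intent.
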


\begin{proof}
Assume that $\mu$ is compatible with the fiber structure, then it is
invariant under all deck transformation of branches of $R$. Thus the
push forward $p^*(\mu_L)$ is independent of the leaf $L$ and, the
pull-back $p_*\circ p^*(L)$ is the same for all leaves $L$ and coincides
with $\mu$.

The equation $p_*\circ p^*(\mu)=\mu$ implies that, the family $\mu$
is invariant under deck transformations of $R$. Hence $\mu$ must be
compatible with the fiber structure.
\end{proof}

Since the natural extension $\mc{N}_R$ is a metric space, we will consider
quasiconformal maps, in Pesin's sense, defined on subsets of $\mc{N}_R$. Let $F$
be a maximal
flow box for $\mc{N}_R$, let  $X(\mc{N}_R,F)$ be the space of
surjective homeomorphisms $\phi:\mc{N}_R\rightarrow \mc{N}_{R_1}$,
quasiconformal in Pesin's sense, such that on $F$ conjugates the
monodromy actions. This condition implies that $\phi$ induces a
family of Beltrami differentials on $F$, compatible with the fiber
structure.

We say that two maps $\phi:\mc{N}_R\rightarrow \mc{N}_{R_1}$ and
$\psi:\mc{N}_R\rightarrow \mc{N}_{R_2}$, in $X(\mc{N}_R,F)$, are
equivalent if there exist a map $\sigma:\mc{N}_{R_1}\rightarrow
\mc{N}_{R_2}$, conformal in Pesin's sense, such that
$\psi=\sigma\circ \phi$ and $\sigma$ is homotopic to the identity,
with homotopy that commutes with dynamics and monodromy actions.

We define the space of deformations of $\mc{N}_R$, and denote it by
$Def(\mc{N}_R,F)$, as the set $X(\mc{N}_R,F)$ modulo the equivalence
relation above. Since we are considering surjective homeomorphisms
$\phi:\mc{N}_R\rightarrow \mc{N}_{R_1}$, it follows that $R$ and
$R_1$ have the same degree.

Note that the image of $F$ under any map in $Def(\mc{N}_R,F)$ is a
maximal flow box of some rational map. We have

\begin{theorem} Let $R$ be a map that is represented by a maximal flow box.
Then, there is a bijection between $Def(\mc{N}_R,F)$ and the space
$T_2(R).$
\end{theorem}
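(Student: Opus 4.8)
The plan is to construct explicit maps in both directions between $Def(\mc{N}_R,F)$ and $T_2(R)$ and verify that they are mutually inverse and well defined on equivalence classes. The key structural fact I would exploit is the hypothesis that $R$ is represented by a maximal flow box $F=p^{-1}(U)$, so that by the discussion preceding the theorem, any conjugacy defined on a neighborhood of $J(R)$ extends to a homeomorphism of the whole lamination, and conversely the whole lamination is determined by the data on $F$ together with the monodromy and dynamics.

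\medskip

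\noindent\textbf{From $T_2(R)$ to $Def(\mc{N}_R,F)$.} First I would start with a representative $(h,V)$ of a point in $T_2(R)$, where $h:V\to\C$ is a quasiconformal embedding conjugating $R$ to a rational map $R_h$ on a neighborhood $V$ of $J(R)$. Shrinking $V$ if necessary so that $V\subset U$, I would lift $h$ through the projection $p$ to a map on the flow box: since $h$ conjugates $R$ to $R_h$, it intertwines the branches of $R$ with those of $R_h$, and hence conjugates the deck transformations that generate the fiber automorphisms. This produces a map $\wh{h}:p^{-1}(V)\to\mc{N}_{R_h}$ on the flow box that conjugates the monodromy actions; because $\mc{N}_R$ is represented by $F$, the orbit of the flow box under monodromy (together with its end compactification) recovers all of $\mc{N}_R$, so $\wh h$ extends to a surjective homeomorphism $\phi:\mc{N}_R\to\mc{N}_{R_h}$ that is quasiconformal in Pesin's sense. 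Thus $\phi\in X(\mc{N}_R,F)$, giving a point of $Def(\mc{N}_R,F)$.

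\medskip

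\noindent\textbf{From $Def(\mc{N}_R,F)$ to $T_2(R)$.} Conversely, given $\phi:\mc{N}_R\to\mc{N}_{R_1}$ in $X(\mc{N}_R,F)$, I would project down. Since $\phi$ conjugates the monodromy actions on $F$ and induces Beltrami differentials compatible with the fiber structure, Lemma \ref{lem.comp} guarantees that these descend through $p_*\circ p^*$ to a single well-defined Beltrami differential on the base neighborhood $U$. Solving the Beltrami equation (or directly projecting $\phi$ via $p$, using that $\phi$ respects fibers up to the monodromy it conjugates) yields a quasiconformal embedding $h:U\to\C$ conjugating $R$ to $R_1$, i.e. a representative $(h,U)$ of a point in $T_2(R)$. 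I would then check that the two constructions are inverse to each other and, crucially, that they are compatible with the equivalence relations: the homotopy-commuting-with-$R$ relation defining $\sim$ on $X_n(R)$ corresponds precisely, under lifting, to the homotopy commuting with dynamics and monodromy that defines equivalence in $Def(\mc{N}_R,F)$, the Möbius ambiguity $\gamma$ on the base matching the Pesin-conformal $\sigma$ upstairs.

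\medskip

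\noindent\textbf{The main obstacle} I expect is the careful verification that equivalence classes correspond, rather than just maps. On the $T_2(R)$ side, the relation involves a Möbius transformation $\gamma$ and a homotopy through maps commuting with $R$ on some subneighborhood; on the $Def$ side it involves a Pesin-conformal $\sigma$ homotopic to the identity through maps commuting with both dynamics \emph{and} the monodromy action. I would need to show that lifting sends one homotopy to the other and that no information is lost or gained in passage between base and lamination — in particular that a homotopy commuting with dynamics downstairs always lifts to one commuting with the monodromy upstairs, and that the end compactification does not introduce spurious identifications on the irregular part (finite for hyperbolic $R$, but requiring attention in general). The well-definedness independent of the choice of representative $U$ versus $V$, handled by shrinking into $F$ and invoking the flow-box representation, is the technical heart that ties the bijection together.
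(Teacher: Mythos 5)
Your proposal follows essentially the same route as the paper: both directions are obtained by passing between the base and the lamination via $p$, using the maximal flow box representation and the monodromy action to propagate data to all of $\mc{N}_R$, with Lemma \ref{lem.comp} controlling compatibility with the fiber structure. The only cosmetic difference is that in the forward direction the paper propagates the Beltrami differential induced by $h$ rather than lifting the conjugacy itself, and your explicit attention to matching the two equivalence relations is, if anything, more detailed than the paper's one-line remark that the construction depends only on the class of $(h,U)$.
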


\begin{proof}
Let $(h,U)$ be an element in $T_2(R)$, then $h$ induces a Beltrami
differential $\nu$ around a neighborhood $U$ of $J(R)$, we consider the
family of Beltrami differentials $\mu$ on a neighborhood of
$p^{-1}(J(R))$. Since $R$ is represented by a maximal flow box, we
can use dynamics and monodromy to propagate $\mu$ on all the inverse
limit. By construction $p_*\circ p^*(\mu)=\mu$, so by Lemma
\ref{lem.comp}, the resulting family of Beltrami differentials is
compatible with the fiber structure. Thus $\mu$ induces an element
in $Def(\mc{N}_R,F)$ and, the construction only depends on
the class of $(h,U)$ in $T_2(R)$.

Now, let $\phi$ be a representative of a point in $Def(\mc{N}_R,F)$.
Since $\phi$ conjugates dynamics and the monodromy actions, we can
push $\phi$, using $p$, to get a quasiconformal map $h:U\rightarrow
\C$, conjugating the corresponding rational maps. Then, $(h,U)$
defines an element in $T_2(R)$.
\end{proof}

\bibliographystyle{amsplain}
\bibliography{workbib}

\end{document}